\def\cal{\mathcal}
\def\Bbb{\mathbb}
\def\R{{\Bbb R}}	
\def\ext{{\cal E}} 
\def\cP{{\cal P}}
\def\I{{\cal I}}
\def\J{{\cal J}}
\def\cR{{\cal R}}
\def\Landau{{\cal O}}
 \def\Om{{\Omega}}
\def\ka{{\kappa}}
\def\de{{\delta}}
\def\eps{{\epsilon}}
\def \supp {\text{\rm supp\,}}
\def \graph {\text{\rm graph\,}}
\def\trans{{\,}^t}
\def\pa{{\partial}}
\newtheorem{thmnr}{Theorem}[section]
\newtheorem{lemnr}[thmnr]{Lemma}
\newtheorem{definr}{Definition}[section]
\theoremstyle{nonumberbreak} 
\begin{document}

\title[On  Fourier restriction for finite-type  perturbations of the  hyperboloid]{On  Fourier restriction for finite-type  perturbations of the hyperbolic paraboloid
}

\author[S. Buschenhenke]{Stefan Buschenhenke}
\address{S. Buschenhenke:  Mathematisches Seminar, C.A.-Universit\"at Kiel,
Ludewig-Meyn-Stra\ss{}e 4, D-24118 Kiel, Germany}
\email{{\tt buschenhenke@math.uni-kiel.de}}
\urladdr{{http://analysis.math.uni-kiel.de/buschenhenke/}}

\author[D. M\"uller]{Detlef M\"uller}
\address{D. M\"uller: Mathematisches Seminar, C.A.-Universit\"at Kiel,
Ludewig-Meyn-Stra\ss{}e 4, D-24118 Kiel, Germany}
\email{{\tt mueller@math.uni-kiel.de}}
\urladdr{{http://analysis.math.uni-kiel.de/mueller/}}

\author[A. Vargas]{Ana Vargas}
\address{A. Vargas: Departmento de Mathem\'aticas, Universidad Aut\'onoma de  Madrid, 28049 Madrid,
Spain
}
\email{{\tt ana.vargas@uam.es}}
\urladdr{{http://matematicas.uam.es/~AFA/}}

\thanks{2010 {\em Mathematical Subject Classification.}
42B25}
\thanks{{\em Key words and phrases.}
hyperbolic  hypersurface, Fourier restriction}
\thanks{The first author was partially supported by the ERC grant 307617.\\
The first two authors were partially supported by the DFG grant MU 761/
11-2.\\
The third author was partially supported by grants MTM2013--40945 (MINECO) and
MTM2016-76566-P (Ministerio de Ciencia, Innovaci$\acute{\text{o}}$n y Universidades), Spain.}

\begin{abstract}  In this note, we continue our  research on  Fourier restriction for hyperbolic surfaces, by studying local  perturbations
of the hyperbolic paraboloid $z=xy,$ which are  of the form $z=xy+h(y),$ where $h(y)$ is a smooth function of finite type. Our results build on previous joint work in which we have studied the case $h(y)=y^3/3$  by means of the bilinear method. As it turns out, the understanding of that special case  becomes  also crucial for the treatment of arbitrary  finite type perturbation terms $h(y).$

\end{abstract}

\maketitle


\tableofcontents

\thispagestyle{empty}

\setcounter{equation}{0}
\section{Introduction}\label{intro}

Our aim in this note is to provide another  step in our program towards gaining an understanding
of  Fourier restriction for general hyperbolic surfaces.

Fourier restriction for hypersurfaces  with non-negative principal curvatures has been studied intensively by many authors (see, e.g., \cite{Bo1}, \cite{Bo2}, \cite{Bo3},\cite{MVV1}, \cite{MVV2}, \cite{TV1}, \cite{TV2},  \cite{W2}, \cite{T4}, \cite{ikm}, \cite{lv10}, \cite{IM-uniform}, \cite{BoG}, \cite{IM}, \cite{bmv16}, \cite{Gu16},
\cite{Gu17}, \cite{Sto17a}). For the case of hypersurfaces of  non-vanishing  Gaussian curvature but principal curvatures of different signs, besides Tomas-Stein type Fourier restriction estimates (see, e.g.,  \cite{To},\cite{Str}, \cite{Gr},\cite{St1}, \cite{ikm}, \cite{IM-uniform},  \cite{IM}), until recently  the only case which had been studied successfully was the case of the hyperbolic paraboloid (or "saddle") in  $\R^3$:
in 2015, independently S. Lee \cite{lee05} and A. Vargas \cite{v05}  established results analogous to Tao's theorem \cite{T2} on elliptic surfaces (such as the $2$ -sphere), with the exception of the  end-point, by means of the bilinear method.  Recently, B. Stovall
\cite{Sto17b}  was able to include also the end-point case. Moreover,  C. H. Cho and J. Lee \cite{chl17}, and
J. Kim \cite{k17},  improved the range by adapting ideas by Guth  \cite{Gu16},
\cite{Gu17} which are based on the polynomial partitioning method. For further  information on the history of the restriction problem, we refer the interested reader to our previous paper \cite{bmv17}.

\medskip

We shall here study surfaces $S$ which are  local perturbations of the hyperbolic paraboloid $z=xy,$ which are given as the graph of a function
$\phi(x,y):=xy+h(y),$ where the function $h$ is smooth and of finite type at the origin, i.e.,

\begin{align}\label{surface}
S:=\{(x,y,xy+h(y)):(x,y)\in \Om\},
\end{align}
where $\Om$ is a sufficiently small neighborhood of the origin, and $h(y)=y^{m+2} a(y),$ with $a(0,0)\ne 0$  and $m\ge 1.$
The Fourier restriction problem, introduced by E. M. Stein in the seventies (for general submanifolds), asks for  the range  of exponents
$\tilde p$ and $\tilde q$ for which  an a priori  estimate of the form
\begin{align*}
\bigg(\int_S|\widehat{f}|^{\tilde q}\,d\sigma\bigg)^{1/\tilde q}\le C\|f\|_{L^{\tilde
p}(\R^n)}
\end{align*}
holds  true for   every Schwartz function   $f\in\cal S(\R^3),$ with a constant $C$
independent of $f.$ Here, $d\sigma$ denotes the surface measure on $S.$

As usual, it will be more convenient to use duality and work in the adjoint setting.  If
$\cR$ denotes the Fourier restriction operator $g\mapsto \cR g:=\hat g|_S$ to the surface
$S,$ its adjoint operator $\cR^*$ is given by $\cR^*f(\xi)=\ext f(-\xi),$ where
 $\ext$ denotes the ``Fourier extension'' operator given by
\begin{align*}
	\ext f(\xi):=\widehat{f\,d\sigma}(\xi)= \int_S f(x)e^{-i\xi\cdot x}\,d\sigma(x),
\end{align*}
with  $f\in L^q(S,\sigma).$ The restriction problem is therefore
equivalent to the question of finding the appropriate range of exponents for which the
estimate
$$
\|\mathcal E f\|_{L^r(\R^3)}\le C\|f\|_{L^q(S,d\sigma)}
$$
holds true with a constant $C$ independent of the function   $f\in L^q(S,d\sigma).$

By identifying a point $(x,y)\in \Om$ with the corresponding point $(x,y,\phi(x,y))$ on $S,$
we may regard our Fourier extension operator $\ext$  as well as an operator mapping
functions on $\Om$ to functions on $\R^3,$ which in  terms of our phase function
$\phi(x,y)=xy+h(y)$ can be expressed  more explicitly  in the form
$$
\ext f(\xi)=\int_{\Om} f(x,y) e^{-i(\xi_1 x+\xi_2 y+\xi_3\phi(x,y))} \eta(x,y) \, dx dy,
$$
if $\xi= (\xi_1,\xi_2,\xi_3)\in \R^3,$ with a suitable  smooth density $\eta.$

Our main result, which generalizes Theorem 1.1 in \cite{bmv17},  is the following

\begin{thmnr}\label{mainresult}
	Assume that $r>10/3$ and  $1/q'>2/r,$ and let $\ext$ denote  the Fourier extension
operator associated to the graph $S$  in \eqref{surface} of the above phase function $\phi(x,y):=xy+h(y),$ where the function  $h$ is smooth and of finite type at the origin. Then, if $\Om$ is a sufficiently small neighborhood of the origin,
	\begin{align*}
		\|\ext f\|_{L^r(\R^3)} \leq C_{r,q} \|f\|_{L^q(\Om)}
	\end{align*}
	for all $f\in L^q(\Om)$.
\end{thmnr}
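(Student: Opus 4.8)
**The plan is to reduce the general finite-type perturbation to the model case $h(y)=y^{m+2}a(y)$ treated in \cite{bmv17} via a rescaling/decomposition argument, and then to sum the resulting pieces.**

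First I would localize and dyadically decompose in the $y$-variable. Writing $\phi(x,y)=xy+h(y)$ with $h(y)=y^{m+2}a(y)$, $a(0)\neq 0$, I would split $\Om$ into strips $|y|\sim 2^{-j}$ (and a central region $|y|\lesssim$ some small constant handled separately, or absorbed), together with the complementary ``far'' region where $h$ is elliptic-like and classical bilinear restriction for surfaces with curvature of one definite sign already gives more than $r>10/3$. On the strip $|y|\sim 2^{-j}$ I would perform an anisotropic rescaling adapted to the Hessian: the Hessian of $\phi$ is $\begin{pmatrix}0&1\\ 1& h''(y)\end{pmatrix}$, so the natural dilation is $x\mapsto \lambda x$, $y\mapsto \mu y$ with $\lambda,\mu,$ and the height scaling chosen so that $xy$ is preserved and $h(\mu y)$ becomes, after rescaling, a function comparable to the model $y^{m+2}$ on a unit-sized region. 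Concretely $\mu\sim 2^{-j}$ and $\lambda\sim 2^{-(m+1)j}$ (so that $\lambda\mu\sim 2^{-(m+2)j}$ matches the size of $h$ on the strip), and one checks that the rescaled phase is $xy+\tilde h(y)$ with $\tilde h$ a finite-type function of the \emph{same} type $m$, with derivatives bounded above and below uniformly in $j$ on the unit region. This is the step where I would need a Taylor-expansion bookkeeping lemma guaranteeing that after rescaling the error terms in $a(\mu y)=a(0)+O(\mu)$ do not destroy the ellipticity of $h''$, i.e.\ that $\tilde h''(y)\sim 1$ uniformly; the smallness of $\Om$ is what makes this work.

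Next, on each rescaled strip I would invoke the model-case theorem (Theorem 1.1 of \cite{bmv17}, which is exactly Theorem~\ref{mainresult} for $h(y)=y^{m+2}/(m+2)$, or more precisely its proof, which is robust under lower-order perturbations of the phase) to obtain $\|\ext_j f_j\|_{L^r}\le C\|f_j\|_{L^q}$ with $C$ \emph{independent of $j$}. Undoing the rescaling converts this into an estimate $\|\ext f\|_{L^r(\text{strip }j)}\le C\, 2^{\alpha(r,q) j}\|f\|_{L^q(\text{strip }j)}$ where the exponent $\alpha(r,q)$ comes from the Jacobians of the dilations in physical and frequency space together with the $L^r$ and $L^q$ norm scalings; one computes $\alpha$ as an affine function of $1/r$ and $1/q'$ and checks that the open conditions $r>10/3$ and $1/q'>2/r$ force $\alpha(r,q)<0$ strictly, so the geometric series $\sum_j 2^{\alpha j}$ converges. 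Summing the pieces (using that the strips have finite overlap in $y$ and, after interpolation or a Littlewood–Paley-type orthogonality argument in $\xi_2$, that the outputs $\ext_j f_j$ can be recombined in $L^r$) yields the full estimate.

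**The main obstacle** I anticipate is \emph{not} the rescaling combinatorics but the reassembly step: the pieces $\ext_j f_j$ are not supported in disjoint frequency regions in a clean way (the $y$-strips localize the surface, which dualizes to a localization in $\xi_2$ only modulo the curved direction $\xi_3$), so summing the $L^r$ norms requires either an honest orthogonality/almost-orthogonality argument — likely a square-function estimate exploiting that $r>10/3>2$ — or an epsilon-removal/interpolation trick to trade a tiny loss in the exponents against the summation, which is precisely why the hypotheses are \emph{open} ($r>10/3$ rather than $r\ge 10/3$, strict inequality $1/q'>2/r$). I would expect the paper to handle this by proving a slightly stronger ``local'' or ``$\ell^q$-decoupled'' version of the model-case estimate and then summing, with the scaling exponent $\alpha(r,q)$ providing the decay that makes the naive triangle-inequality summation succeed in the open range.
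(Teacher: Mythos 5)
Your outer reduction (dyadic decomposition in $y$, anisotropic rescaling of each strip, and summation of the resulting geometric series using $1/q'>2/r$) is exactly Section 2 of the paper, and your closing remark that the scaling gain makes the naive triangle-inequality summation succeed is correct --- no orthogonality in $\xi$ is needed for this outer sum, so the ``reassembly obstacle'' you single out is not where the difficulty lies (also note that only $1/q'>2/r$ enters the summation; $r>10/3$ is needed for the piecewise estimate itself). The genuine gap is in the step you black-box: ``invoke the model-case theorem\dots or more precisely its proof, which is robust under lower-order perturbations of the phase.'' First, Theorem 1.1 of \cite{bmv17} treats only $h(y)=y^3/3$, not $y^{m+2}$ as you write; what the rescaling actually produces (after also removing the quadratic Taylor part by a shear $x\mapsto x+c_\kappa y$, a step you omit) is a family of phases $xy+\epsilon h_\kappa(y)$ with $\epsilon=\kappa^{m+1}\ll1$ and $h_\kappa$ of \emph{cubic} type, i.e.\ $|h_\kappa'''|\sim 1$ uniformly in $\kappa$. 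Second, and decisively, what is needed is an extension estimate for this family with a constant uniform in $\kappa$ --- equivalently, uniform in $\epsilon$ and over all cubic-type perturbations with fixed derivative bounds $C_l$ --- and establishing that uniformity is the main content of the paper: the admissible-pair and transversality analysis of Section 3, the prototype bilinear estimate with the sharp $\delta^{\frac72-\frac6p}$ dependence (Theorem \ref{bilinear}), and the Whitney decomposition together with the almost-orthogonality Lemma \ref{ao} all have to be redone for the new phases, with constants depending only on the $C_l$. ``Robustness under perturbation'' is precisely the theorem to be proved here, not a fact one can cite, and your proposal contains no argument for it.

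A smaller but real defect: your normalization $\lambda\sim2^{-(m+1)j}$, $\mu\sim2^{-j}$, chosen to make the perturbation unit-sized, stretches the $x$-support of the rescaled piece to length $\sim2^{(m+1)j}$, so the rescaled operator does not live on a unit-sized region and the model theorem could not be applied to it directly even if it were available. The paper avoids this by leaving $x$ unscaled and accepting the small coefficient $\epsilon=\kappa^{m+1}$ on the (cubic-type) perturbation instead, which is why the whole bilinear machinery must be carried out uniformly in $\epsilon$.
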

For the proof of this result, we shall strongly build on the approach devised for the special case where $h(y)=y^3/3.$ In many  arguments, we shall be able to basically follow \cite{bmv17}. Therefore, we shall concentrate on explaining  the new ideas and modifications that are needed to   handle   more general finite type perturbations.

\bigskip

\noindent\textsc{Convention:}
Unless  stated otherwise, $C > 0$ will stand for an absolute constant whose value may vary
from occurrence to occurrence. We will use the notation $A\sim_C B$  to express  that
$\frac{1}{C}A\leq B \leq C A$.  In some contexts where the size of $C$ is irrelevant we
shall drop the index $C$ and simply write $A\sim B.$ Similarly, $A\lesssim B$ will express
the fact that there is a constant $C$ (which does not depend  on the relevant quantities
in
the estimate) such that $A\le C B,$  and we write  $A\ll B,$ if the constant $C$ is
sufficiently small.
\medskip

\begin{center}
{\textsc{Acknowledgments}}
\end{center}
The authors would like to express their sincere gratitude to the referee for  many valuable  suggestions which have greatly helped to improve the presentation of the material in this article.


\setcounter{equation}{0}
\section{Reduction to perturbations of cubic type}\label{sec:cubred}
Recall that we are assuming that
\begin{equation}\label{}
\phi(x,y)=xy+y^{m+2} a(y), \quad \text{where}\ a(0)\ne0, m\ge 1.
\end{equation}
We may assume without loss of generality that $\Om$  is a square, and then  decompose the domain $\Om$ dyadically with respect to the $y$-variable into rectangular boxes
$$\Om=\bigcup\limits_{\pm}\bigcup\limits_{i\ge i_0} \Om^\pm_{2^{-i}},
$$
 where for any $\ka=2^{-i}$ we have $\Om^{-}_\ka=-\Om^+_{\ka},$
and $\ka\le y\le 2\ka$ on $\Om^+_\ka.$  Note that we may assume that $i_0\gg 1$ is sufficiently large, by  choosing $\Om$ sufficiently small. By
 $$
\ext^{\pm}_{\ka} f(\xi)=\int_{\Om^{\pm}_{\ka}} f(x,y) e^{-i(\xi_1 x+\xi_2 y+\xi_3\phi(x,y))} \eta(x,y) \, dx dy
$$
we denote the contribution of $\Om^{\pm}_{\ka}$ to $\ext f.$
\medskip

Let us fix one of these subsets, say $\Om^+_\ka.$ We then apply an
affine change of variables to pass to the phase
$$\phi_\ka(x,y):= \frac 1\ka \phi\big(x,\ka(1+y)\big)=x(1+y)+ \ka^{m+1}(1+y)^{m+2} a\big(\ka (1+y)\big),
$$
where $0\le y\le 1$. Actually, by taking, say, $1000$ subdomains, we may even assume that $0\le y\le 1/1000.$ Let us put
$$
H_\ka(y):= (1+y)^{m+2} a\big(\ka (1+y)\big).
$$
Then
$$
\phi_\ka(x,y)=x(1+y) +\ka^{m+1} H_\ka(y) =x+xy +\ka^{m+1} P_2(\ka,y)+\ka^{m+1} h_\ka(y),
$$
where $P_2(\ka,y)$ denotes the Taylor polynomial of $H_\ka(y)$ of degree $2$ centered at $y=0.$  As in our previous paper \cite{bmv17},
we may then write
\begin{eqnarray*}
x+xy +\ka^{m+1} P_2(\ka,y)&=xy+c_\ka y^2+ \text{affine linear terms}\\
&=(x+c_\ka y)y+ \text{affine linear terms}.
\end{eqnarray*}
The linear change of variables $x\mapsto x+c_\ka y$ then allows to reduce to the  phase function
$$
\tilde \phi_\ka(x,y):= xy+\ka^{m+1} h_\ka(y),
$$
for $(x,y)$ in a sufficiently small neighborhood of the origin which can be chosen independently of $\ka.$ Note that
\begin{equation}\label{cubic1}
h_\ka(0)=h'_\ka(0)=h''_\ka(0).
\end{equation}
Moreover, it is easy to see that  for $\ka$ sufficiently small (depending on $m, \, a(0)\ne 0,\, \|a'\|_\infty,\|a''\|_\infty$ and $\|a'''\|_\infty$), we have
\begin{equation}\label{cubic2}
|h'''_\ka(y)|\ge   \tfrac{(m+2)(m+1)m}2 |a(0)|\ge C_3>0.
\end{equation}
A similar reasoning shows that
\begin{equation}\label{cubic3}
|h'''_\ka(y)|\le 4 C_3, \quad \text{and}\quad  |h^{(l)}_\ka(y)|\le C_l \ \   \text{for all } |y|\le 1/1000, l\ge 4,
\end{equation}
with constants $C_l$ which are independent of $\ka.$

Similar arguments apply to $\Om^{-}_\ka.$ We  consider next the Fourier extension operator
$$
\tilde\ext^{\pm}_{\ka} f(\xi)=\int_{\tilde \Om_\ka^{\pm}} f(x,y) e^{-i(\xi_1 x+\xi_2 y+\xi_3(xy+\ka^{m+1} h_\ka(y))} \tilde \eta_\ka(x,y) \, dx dy,
$$
where $\tilde \eta_\ka(x,y)=\eta(x,\kappa(1+y)),$ which corresponds to the operator $\ext^{\pm}_{\ka}$ in the new coordinates. Then an  easily scaling argument shows  that  the following estimates for $\ext^{\pm}_{\ka}$ and
$\tilde\ext^{\pm}_{\ka}$ are equivalent:
\begin{equation}\label{extka1}
\|\tilde\ext^{\pm}_{\ka} f\|_{L^r}\le C \|f\|_{L^q};
\end{equation}
\begin{equation}\label{extka2}
\|\ext^{\pm}_{\ka} g\|_{L^r}\le C \ka^{1-2/r-1/q} \|g\|_{L^q}
\end{equation}
for all g with $\supp g\subset \{|y-\ka|\le \ka/1000\} $ (and support in $x$ sufficiently small).

Since we work under the assumption that $1/q'>2/r,$  we thus see that by summing a geometric series  it will suffice to prove the uniform estimates \eqref{extka1} in order to prove Theorem \ref{mainresult}.


\setcounter{equation}{0}
\section{Transversality conditions and admissible pairs of sets}\label{sect:admissible}
In the previous section, we have seen that we may reduce to proving uniform Fourier extension estimates for phases
$$
\phi(x,y)=xy+\epsilon h(y),
$$
defined on a  small square $Q$ which, after a further scaling, we may assume to be the square  $Q=[-1,1]\times [-1,1],$
where $\epsilon>0$ is assumed to be sufficiently small,  and where $h$ is a  perturbation  function {\it  of cubic type} in $y$ of the  phase $xy.$    By this, we mean that  $h$ is smooth and satisfies
\begin{eqnarray}\label{cubtype}
\begin{cases}  & h(0)=h'(0)=h''(0)=0,\\
&\frac {C_3}4\le |h'''(y)|\le C_3 \quad\text{for all}\  |y|\le 1,\\
&|h^{(l)}(y)|\le C_l \quad\text{for all   } l\ge 4  \ \text{and}\   |y|\le 1
\end{cases}
\end{eqnarray}
(compare \eqref{cubic1}--\eqref{cubic3}, where we have applied an additional scaling by a factor 1000 in $y$). Here, the constants $C_l$ will be  assumed to be fixed constants,  with $C_3>0,$ and our goal will be to establish uniform estimates which will depend only on these constants (in many parts actually only on $C_3$), but not on $\epsilon.$

\subsection{Admissible pairs of sets $U_1,$ $U_2$ on which transversalities are of a fixed
size: an informal discussion}\label{pairs of sets}
Recall next that the bilinear approach is based on  bilinear estimates of the
form
\begin{align}\label{bil1}
 \|\ext_{U_1}(f_1)\,\ext_{U_2}(f_2)\|_p \leq C(U_1,U_2) \|f_1\|_2\|f_2\|_2.
\end{align}
 Here,  $\ext_{U_1}$ and $\ext_{U_2}$ are the Fourier extension operators associated to
 patches of sub-surfaces $S_i:=\graph \phi|_{U_i}\subset S,\ i=1,2,$  with $U_i\subset \Om.$
 What is crucial for obtaining  useful bilinear estimates is that the two  patches of
 surface $S_1$ and $S_2$ satisfy certain {\it transversality conditions,} which  are
 stronger than  just assuming that $S_1$ and $S_2$ are transversal as hypersurfaces (i.e.,
 that all normals to $S_1$ are transversal to all normals to $S_2$). Indeed, what  is
 needed in addition is the following (cf. \cite {bmv17},\cite{lee05}, \cite{v05}, \cite{lv10}, or \cite{be16}):
 \smallskip

Denoting by  $H\phi$ the Hessian of $\phi,$ we consider the following quantity
\begin{align}\label{transs}
\tilde\Gamma^\phi_{z}(z_1,z_2,z_1',z_2'):=	\left\langle
(H\phi)^{-1}(z)(\nabla\phi(z_2)-\nabla\phi(z_1)),\nabla\phi(z_2')-\nabla\phi(z_1')\right\rangle.
\end{align}
 If its modulus  is  bounded from below by a constant $c>0$  for all $z_i=(x_i,y_i),\, z_i'=(x_i',y_i')\in U_i$, $i=1,2$, $z=(x,y)\in U_1\cup U_2,$ then we have \eqref{bil1} for $p>5/3,$ with a constant $C(U_1,U_2)$ that depends only on this constant  $c$  and on upper bounds for the derivatives of $\phi.$  If $U_1$ and
$U_2$ are sufficiently small (with sizes depending on upper bounds of the first and second
order derivatives of $\phi$ and a lower bound for the determinant of $H\phi$) this
condition  reduces to the estimate
\begin{align}
|\Gamma^\phi_{z}(z_1,z_2)|\geq c,
\end{align}
for $z_i=(x_i,y_i)\in U_i$, $i=1,2$, $z=(x,y)\in U_1\cup U_2$, where
\begin{align}\label{trans}
\Gamma^\phi_{z}(z_1,z_2):=	\left\langle
(H\phi)^{-1}(z)(\nabla\phi(z_2)-\nabla\phi(z_1)),\nabla\phi(z_2)-\nabla\phi(z_1)\right\rangle.
\end{align}
It is easy to check that for $\phi(x,y)=xy+\epsilon h(y)$, we have
\begin{eqnarray} \label{gammaz}
\Gamma^\phi_{z}(z_1,z_2)
	 &=:& 2(y_2-y_1)\,\tau_{z}(z_1,z_2),
\end{eqnarray}
where
\begin{equation}\label{TV1}
\tau_{z}(z_1,z_2):=x_2-x_1+\epsilon[h'(y_2)-h'(y_1)-\frac 12 h''(y)(y_2-y_1)].
\end{equation}
As in \cite{bmv17}, it will be particularly important to look at the
expression
\eqref{TV1} when $z=z_1\in U_1,$ and $z=z_2\in U_2,$  so that the two ``transversalities''
\begin{align}\label{TV2}
	\tau_{z_1}(z_1,z_2)=x_2-x_1+\epsilon[(h'(y_2)-h'(y_1)-\frac 12 h''(y_1)(y_2-y_1)]\\
	\tau_{z_2}(z_1,z_2)=x_2-x_1+\epsilon[(h'(y_2)-h'(y_1)-\frac 12 h''(y_2)(y_2-y_1)] \label{TV3}
\end{align}
become relevant.
Note  the following relation  between these quantities:
\begin{align}\label{TV2+TV3}
|\tau_{z_1}(z_1,z_2)-\tau_{z_2}(z_1,z_2)|&=\frac \epsilon 2 |h''(y_2)-h''(y_1)||y_2-y_1|\nonumber
\sim \eps  |h'''(\eta)|(y_2-y_1)^2\\
&\sim \eps (y_2-y_1)^2,
\end{align}
where $\eta$ is some intermediate point.
\medskip

Following Section 2 in \cite{bmv17}, we shall try to devise neighborhoods $U_1$ and $U_2$ of two given points
 $z_1^0=(x_1^0,y_1^0)$ and $z_2^0=(x_2^0,y_2^0)$ on which these quantities are roughly
  constant for $z_i=(x_i,y_i)\in U_i,$ $i=1,2$,
 and which  are also  essentially chosen as large as possible. The corresponding pair $(U_1,U_2)$ of
 neighborhoods of $z^0_1$ respectively $z^0_2$ will be called an {\it admissible pair}.

As in \cite{bmv17}, we will present the basic motivating idea in this subsection, and give a precise definition of admissible pairs
 in the next subsection.
\medskip

 In a first step, we choose a large constant $C_0\gg 1$, which will be made precise only later,
 and assume that
 $|y^0_2-y^0_1|\sim C_0\rho$ for some $\rho>0.$  It is then natural to allow $y_1$  to vary on  $U_1$ and $y_2$
 on $U_2$  by at most $\rho$ from $y^0_1$ and
 $y^0_2,$ respectively, i.e., we shall assume that
  \begin{align*}
|y_i-y^0_i|\lesssim \rho, \qquad \text{for}\quad  z_i\in U_i,\, i=1,2,
\end{align*}
so that indeed
\begin{equation}\label{rhosize}
|y_2-y_1|\sim C_0\rho \qquad\text{for}\quad  z_i\in U_i,\, i=1,2.
\end{equation}

 Recall next  the  identity \eqref{TV2+TV3}, which in particular implies that
  \begin{align}\label{TV2+TV3'}
|\tau_{z^0_1}(z^0_1,z^0_2)-\tau_{z^0_2}(z^0_1,z^0_2)|\sim C_0^2\eps\rho^2.
\end{align}
We begin with

\medskip

\noindent {\bf Case 1: Assume that $ |\tau_{z_1^0}(z_1^0,z_2^0)|\le |\tau_{z_2^0}(z_1^0,z_2^0)|.$ }
Let us then write
\begin{equation}\label{defdelta}
 |\tau_{z_1^0}(z_1^0,z_2^0)|= \eps\rho^2\de,
\end{equation}
where $\de\ge 0.$  Note, however, that  obviously $\eps\rho^2\de\lesssim1.$  From \eqref{TV2+TV3'} one then easily
deduces that there are two subcases:

\medskip

\noindent {\bf Subcase 1(a): (the ``straight box'' case),} where  $|\tau_{z_1^0}(z_1^0,z_2^0)| \sim
|\tau_{z_2^0}(z_1^0,z_2^0)|,$  or, equivalently, $\de\gtrsim1.$ In this case,  also
$|\tau_{z_2^0}(z_1^0,z_2^0)|\sim
\eps\rho^2\de.$

\medskip

\noindent {\bf Subcase 1(b): (the ``curved box'' case),} where  $|\tau_{z_1^0}(z_1^0,z_2^0)| \ll
|\tau_{z_2^0}(z_1^0,z_2^0)|,$  or, equivalently, $\de\ll 1.$ In this case,  $|\tau_{z_2^0}(z_1^0,z_2^0)|\sim
\eps\rho^2.$

\medskip
Given $\rho$ and $\delta,$ we shall then want to devise $U_1$ and $U_2$ so that the same kind of conditions hold
for all $z_1\in U_1$ and $z_2\in U_2,$
i.e.,
$$
 |\tau_{z_1}(z_1,z_2)|\sim \eps\rho^2\de,\text{  and} \quad  |\tau_{z_2}(z_1,z_2)|\sim \eps\rho^2(1\vee \de).
$$
Note that in view of  \eqref{TV2+TV3} and \eqref{rhosize} the second condition is redundant,
and so the  only additional condition that needs  to be satisfied  is that, for all
$z_1=(x_1,y_1)\in
U_1$ and $z_2=(x_2,y_2)\in U_2,$ we have
\begin{align*}
|\tau_{z_1}(z_1,z_2)|\sim \eps\rho^2\de.
\end{align*}

\medskip
The  choice of the sets $U_1$ and $U_2$ becomes particularly lucid if we first assume that $z_1^0=0,$ so let us begin by examining this case.  Later we shall see that a simple change of coordinates will allow to reduce to this case for general $z_1^0.$

\medskip
{\bf The case $z_1^0=0:$}
We  shall  want to choose $U_2$  as large  as possible w.r.  to $y_2,$ so we assume that on $U_2$ we have
$|y_2-y_2^0|\lesssim\rho.$ Let $$a^0:=\tau_{0}(0,z_2^0),$$ so  that $|a^0|\sim
\eps\rho^2\de.$  Then we shall assume that on $U_2$ we have, say,
$|\tau_{0}(0,z_2)-a^0|\ll\eps \rho^2\de.$

If $z_1^0=0,$ this means that  we shall  {\bf define  $U_2$}  by the following conditions:
\begin{eqnarray}\label{U2c0}
\begin{split}
|y_2-y_2^0|&\lesssim \rho, \\
\ |\tau_{0}(0,z_2)- \tau_{0}(0,z_2^0)|&=|x_2+\eps h'(y_2)-a^0|\ll \eps \rho^2\de.
\end{split}
\end{eqnarray}


As for $U_1,$ given our choice of $U_2,$ what we  still need is that $|\tau_{z_1}(z_1,z_2)-\tau_{0}(0,z_2)|\ll \eps\rho^2\de$ for all $z_1\in U_1$ and $z_2\in U_2,$ for then also $|\tau_{z_1}(z_1,z_2)-\tau_{0}(0,z_2^0)|\ll \eps\rho^2\de$ for all such $z_1, z_2.$

Note that, for 
$y_2$ fixed,  the equation
\begin{eqnarray}\nonumber
0= \tau_{z_1}(z_1,z_2)-\tau_{0}(0,z_2)=-\Big(x_1 +\eps[h'(y_1)
+\tfrac {h''(y_1)}2(y_2-y_1)]\Big) \label{difftaus}
\end{eqnarray}
defines a curve $x_1=\gamma(y_1),$ so that the condition $|\tau_{z_1}(z_1,z_2)-\tau_{0}(0,z_2)|\ll \eps\rho^2\de$  determines  essentially an $\eps\rho^2\de$ neighborhood of this curve, whose slope $\pa_{y_1}\gamma$ is of order $O(\eps).$ Moreover, since $y_2$ is allowed to vary within $U_2$ of order $O(\rho),$ and since   \eqref{cubtype}  shows that $|\pa_{y_2}(\pa_{y_1}\gamma)|=|\eps h'''(y_1)/2|\sim \eps,$ we see that the natural condition to impose for $U_1$ is that
$\eps \rho|y_1-y_1^0|=\eps \rho|y_1|\ll \eps\rho^2\de,$ i.e.,
$$|y_1|\le  \rho\de\wedge \rho=\rho(1\wedge \de)
$$
 (note here that, in Subcase 1(a), we may have $\de\ge 1$).
   Moreover,  by the mean value theorem and \eqref{cubtype}, we have  $|h'(y_1)|\sim |h'''(\eta)|y_1^2\sim C_3 y_1^2$  and $|h''(y_1)|\sim |h'''(\tilde \eta)y_1|\sim C_3|y_1| $
 whereas $|y_2-y_1|\sim \rho.$ Thus we see  that  $|\eps[h'(y_1)+\tfrac {h''(y_1)}2(y_2-y_1)]|\ll \eps \rho^2 \de.$
 \medskip

  In combination, this shows that it will be natural to {\bf define  $U_1$} by the following  conditions:
 \begin{eqnarray}\label{U1c0}
 \begin{split}
&|y_1|\lesssim \rho(1\wedge \de), \\
&\big|x_1|\ll \eps\rho^2\de.
 \end{split}
\end{eqnarray}

\medskip
{\bf The case of arbitrary $z_1^0:$} Let now  $z_1^0:=(x_1^0,y_1^0)$ be arbitrary.   In a  first step  we translate the point $z_1^0$ to the origin by writing
$ z=z_1^0+\tilde z,$ i.e., $x=x_1^0+\tilde x, y=y_1^0 +\tilde y.$  Then
\begin{eqnarray*}
\phi(z)&=&\phi(z_1^0+\tilde z)=(x_1^0+\tilde x)(y_1^0+\tilde y)+\eps h(y_1^0+\tilde y)\\
&=& \tilde x \tilde y +\eps \tfrac{h''(y_1^0)}2(\tilde y)^2+\eps H(\tilde y) + \text{affine linear terms}\\
&=& \big(\tilde x+\eps \tfrac{h''(y_1^0)}2\tilde y\big)\tilde y+\eps H(\tilde y) + \text{affine linear terms},
\end{eqnarray*}
with
\begin{equation}\label{fdef}
H(\tilde y)=h(\tilde y+y_1^0)- h(y_1^0) -h'(y_1^0)\tilde y -\tfrac {h''(y_1^0)}2 (\tilde y)^2.
\end{equation}
\color{black}
 By our assumptions \eqref{cubtype}  on $\phi,$  the error term $H$ satisfies estimates of the  form
\begin{eqnarray}\label{cuberror}
\begin{cases}  & H(0)=H'(0)=H''(0)=0,  \\
&|H'''(\tilde y)|=|h'''(y_1^0+\tilde y)|\sim C_3,\\
&|H^{(l)}(\tilde y)|\le C_l \quad\text{for all}\   l\ge 4,
\end{cases}
\end{eqnarray}
which means that also $H$ is of cubic type, uniformly in $z_1^0,$ with the same constants $C_l$ as for $h.$

It is thus natural to introduce a further change of coordinates
\begin{equation}\label{newcoord}
x'':=\tilde x+\eps  h''(y_1^0)\tilde y=x-x_1^0+\eps \tfrac {h''(y_1^0)}2 (y-y_1^0), \, y'':=\tilde y=y-y_1^0,
\end{equation}
so that in these coordinates

\begin{equation}\label{newcoordphi}
\phi(z)=x'' y''+\eps H(y'') + \text{affine linear terms}.
\end{equation}
This shows that in these coordinates $(x'',y''),$ the function $\phi$ is again a perturbation of $x'' y''$ by a  perturbation function $H(y'')$  of cubic type in the sense of \eqref{cubtype} (up to an affine linear term, which is irrelevant),   uniformly in the parameter $z_1^0.$
\color{black}

We can now define the sets $U_1$ and $U_2$ by choosing them in terms of the coordinates  $(x'',y'')$ as in \eqref{U1c0} and \eqref{U2c0}, only with the function $h$ replaced by $H,$ and then express those sets in terms of our original coordinates $(x,y).$  Note also that in the coordinates $(x'',y''),$ we have
$$
(x_1^0)''=0, (y_1^0)''=0\quad\text{and} \quad  (x_2^0)''=x_2^0-x_1^0+\eps \frac{h''(y_1^0)}2 (y_2^0-y_1^0), (y_2^0)''=y_2^0-y_1^0,
$$
 and $\tau_{z_1^0}(z_1^0,z_2)=x_2''+\epsilon H'(y_2'').$ In combination with \eqref{fdef} this  then leads  to the following {\bf  choices of $U_1$ and $U_2:$ }

 We {\bf define  $U_1$} by the  conditions
 \begin{eqnarray}\label{U1c}
  \begin{split}
|y_1-y_1^0|&\lesssim \rho(1\wedge \de), \\
\big|x_1-x_1^0 +\eps \tfrac {h''(y_1^0)}2(y_1-y_1^0)|&\ll \eps\rho^2\de,
 \end{split}
\end{eqnarray}

and   $U_2$  by the conditions
\begin{eqnarray}\label{U2c}
  \begin{split}
|y_2-y_2^0|&\lesssim \rho, \\
|x_2-x_1^0 +\eps[h'(y_2)-h'(y_1^0) -\tfrac {h''(y_1^0)}2(y_2-y_1^0)]-a^0|&\ll \eps \rho^2\de,
  \end{split}
\end{eqnarray}
where
\begin{align}\label{a0defi}
	a^0:=\tau_{z_1^0}(z_1^0,z_2^0)
\end{align}
  is assumed to be of size   $|a^0|\sim \eps \rho^2 \de.$

\noindent{\bf Note:}  $U_1$ is essentially the affine image of a rectangular box of dimension $\eps\rho^2\de \times
\rho(1\wedge \de).$  However, when $\de\ll 1,$ then $U_2$ is a thin curved box, namely the segment of an
$\eps\rho^2\de$-neighborhood of a curve of curvature $\sim\eps$ lying within the  horizontal strip where $|y_2-y_2^0|\lesssim \rho.$ On
the other hand, when $\de\gtrsim 1,$   then it is easily  seen that  $U_2$ is essentially a rectangular box  of
dimension $\eps\rho^2\de\times \rho.$ This explains  why we  called Subcase 1(b)  where $\de\ll1$  the ``curved box
case'', and   Subcase 1(a)  where $\de\gtrsim 1$  the ``straight  box case.''

\medskip
\noindent {\bf Case 2: Assume that $|\tau_{z_1^0}(z_1^0,z_2^0)|\ge |\tau_{z_2^0}(z_1^0,z_2^0)|.$ }

This case  can easily be reduced to the previous one by symmetry. By \eqref{TV1}, we have $\tau_z(z_1,z_2)=-\tau_z(z_2,z_1)$. Hence we just
need to interchange the roles of $z_1$ and $z_2$ in the previous discussion, so that it is natural here to
define
%
$\tilde U_1$ by the  conditions
 \begin{eqnarray}\label{U3c}
  \begin{split}
|y_1-y_1^0|&\lesssim \rho, \\
\big|x_1-x_2^0 +\eps [h'(y_1)-h'(y_2^0)-\tfrac {h''(y_2^0)}{2}(y_1-y_2^0)]-a^0|&\ll \eps\rho^2\de,
 \end{split}
\end{eqnarray}
 where $a^0=\tau_{z_2^0}(z_2^0,z_1^0)=-\tau_{z_2^0}(z_1^0,z_2^0),$  and   $\tilde U_2$  by the conditions
\begin{eqnarray}\label{U4c}
  \begin{split}
|y_2-y_2^0|&\lesssim \rho(1\wedge \de), \\
|x_2-x_2^0 +\eps\tfrac {h''(y_2^0)}2(y_2-y_2^0)|&\ll \eps \rho^2\de.
  \end{split}
\end{eqnarray}
\bigskip

\medskip

\subsection{Precise definition of admissible pairs within $Q\times Q$}\label{preciseadmissible}
In view of our discussion in the previous subsection, we shall here  devise  more precisely
certain \lq\lq dyadic''    subsets   of $Q\times Q$ which will
assume the roles of the sets $U_1,$ respectively $U_2, $ in such a way  that on every pair of such sets each of
our transversality functions  is essentially of some fixed dyadic size, and which will moreover  lead to  a kind
of Whitney decomposition of $Q\times Q$ (as will be shown in  Section \ref{bilinlin}). Again, this mimics the approach in \cite{bmv17}, namely Section 2.2.
To begin with, as before we fix a large dyadic constant $C_0\gg1.$

\smallskip
In  a first step,  we   perform a  classical {\bf dyadic decomposition in the
$y$-variable}
which is a variation of the one in \cite{TVV}:
For a given  dyadic number $0<\rho\lesssim 1,$  we denote for
$j\in\mathbb Z$ such that $|j|\rho\le 1$  by $I_{j,\rho}$ the dyadic interval
$I_{j,\rho}:=[j\rho,j\rho+\rho)$ of length $\rho,$ and by $V_{j,\rho}$ the corresponding
horizontal ``strip''   $V_{j,\rho}:=[-1,1]\times I_{j,\rho}$ within $Q.$
Given two dyadic intervals $J,\,J'$ of the same size, we say that they are
{\it related} if their parents are adjacent but they are not adjacent. We divide each
dyadic interval $J$ in a disjoint union of dyadic subintervals $\{I_J^k\}_{1\le k\le
C_0/8},$ of length  $8|J|/C_0.$  Then, we define $(I,I')$ to be an \it
admissible pair of dyadic intervals \rm if and only if there are $J$ and $J'$ related
dyadic intervals and $1\le k,\,j\le C_0/8$ such that $I=I_J^k$ and $I'=I_{J'}^j.$

We say that a pair of strips $(V_{j_1,\rho},V_{j_2,\rho})$ is {\it admissible } and  write
$V_{j_1,\rho}\backsim V_{j_2,\rho},$ if  $(I_{j_1,\rho},I_{j_2,\rho})$ is a pair of
admissible dyadic intervals. Notice that in this case,
  \begin{align}\label{admissibleV}
C_0/8< |j_2-j_1|< C_0/2.
\end{align}
One can  easily see that  this leads to the following disjoint decomposition of $Q\times
Q:$
\begin{align}\label{whitney1}
Q\times Q= \overset{\cdot}{\bigcup\limits_{\rho}}
\,\Big(\overset{\cdot}{\bigcup\limits_{V_{j_1,\rho}\backsim
V_{j_2,\rho}}}V_{j_1,\rho}\times V_{j_2,\rho}\Big),
\end{align}
where the first union is meant to be over all such dyadic $\rho$'s.

\medskip
In a second step, we perform a non-standard {\bf Whitney type decomposition of any given
admissible pair of strips}, to obtain subregions in which the transversalities are roughly
constant.

To simplify notation, we fix $\rho$ and an admissible pair  $(V_{j_1,\rho},V_{j_2,\rho}),$
and simply write $I_i:=I_{j_i,\rho},\, V_i:=V_{j_i,\rho}, \, i=1,2,$
so that $I_i$ is an interval of length $\rho$ with left endpoint  $j_i\rho,$   and
\begin{align}\label{Vi}
V_1=[-1,1]\times I_1, \qquad V_2=[-1,1]\times I_2,
\end{align}
are rectangles of dimension $2\times \rho,$ which are vertically  separated at scale
$C_0\rho.$
More precisely, for $z_1=(x_1,y_1)\in V_1$ and $z_2=(x_2,y_2)\in V_2$ we have
$|y_2-y_1|\in
|j_2\rho-j_1\rho|+[-\rho,\rho],$ i.e.,
\begin{align}\label{yseparation}
	C_0\rho/2\le |y_2-y_1|\le  C_0\rho.
\end{align}

Let $0<\delta\lesssim \eps^{-1}\rho^{-2}$ be a dyadic number (note that $\delta$ could be big,
depending on $\rho$), and  let $\J$ be  the set of points
which partition the interval $[-1,1]$ into (dyadic) intervals of the same length  $\eps\rho^2\de.$

\smallskip
Similarly, for $i=1,2,$ we choose a finite  equidistant partition $\I_i$  of  width
$\rho(1\wedge\delta)$  of  the interval $I_i$  by  points $y_i^0\in \I_i.$
Note: if $\de>1,$ then $\rho(1\wedge\delta)=\rho,$ and we can choose for $\I_i$ just
 the
 singleton  $\I_i=\{y_i^0\},$ {where $y_i^0$ is the left endpoint of $I_i.$}
In view of \eqref{U1c}, \eqref{U2c} and in analogy with \cite{bmv17}, we then define:
\begin{definr}
For any parameters $x^0_1,t^0_2\in\J,$  $y^0_1\in\I_1$  defined in the previous lines and $y^0_2$ the left
endpoint of $I_2,$ we  define the
sets
\begin{align}
U_1^{x^0_1,y_1^0,\delta} :=\{(x_1,y_1)&:0\le y_1-y_1^0< \rho(1\wedge\delta),\,
0\le x_1-x^0_1+\eps\tfrac{h''(y_1^0)}{2}(y_1-y_1^0)< \eps\rho^2\delta \}, \nonumber\\
\label{whitneybox1}&\\
U_2^{t^0_2,y_1^0,y^0_2,\delta} :=\{(x_2,y_2)&:0\le y_2-y^0_2<\rho, \nonumber\\&\quad0\le x_2-t^0_2+\eps[h'(y_2)-h'(y_1^0)-\tfrac{h''(y_1^0)}{2}(y_2-y_1^0) ]<
\eps\rho^2\delta\},\nonumber
\end{align}
and the points
\begin{equation}\label{pointsinU}
z^0_1=(x^0_1,y^0_1), \qquad z^0_2=(x^0_2,y^0_2)
\end{equation}
where \begin{equation*}
x_2^0:=t^0_2-\eps[h'(y^0_2)-h'(y_1^0)-\tfrac{h''(y_1^0)}{2}(y^0_2-y_1^0) ].
\end{equation*}
\end{definr}

Observe that then
$$
z^0_1\in U_1^{x^0_1,y_1^0,\delta}\subset V_1\quad \text{  and   } \quad z^0_2\in
U_2^{t^0_2,y_1^0,y^0_2,\delta}\subset V_2.
$$
Indeed, $z^0_i$ is in some sense the ``lower left'' vertex of $U_i,$ and the horizontal projection of
$U_2^{t^0_2,y_1^0,y^0_2,\delta}$ equals $I_2.$
Moreover, if we define $a^0$ by \eqref{a0defi}, we have that $x_1^0+a^0=t^0_2,$ so that our definitions of the sets
$U_1^{x^0_1,y_1^0,\delta}$ and $U_2^{t^0_2,y_1^0,y^0_2,\delta}$ are very close to the ones for the  sets $U_1$
and
$U_2$ (cf. \eqref{U1c}, \eqref{U2c}) in the previous subsection.
Notice also that we may re-write
\begin{equation}\label{U2alt}
U_2^{t_2^0,y_1^0,y_2^0,\delta}=\{z_2=(x_2,y_2): 0\le\tau_{z_1^0}(z_1^0,z_2)-a^0<\eps\rho^2\delta, \ 0\le y_2-y_2^0<\rho\}.
\end{equation}

 In particular, $U_1^{x^0_1,y_1^0,\delta}$ is  again essentially a paralellepiped of sidelengths $\sim
 \eps\rho^2\de\times \rho(1\wedge \de),$ containing the point $(x^0_1,y_1^0),$  whose
longer side has slope $y_1^0$ with respect to the $y$-axis.
 Similarly, if $\de\ll 1,$
then $U_2^{t^0_2,y_1^0,y^0_2\delta}$ is a thin curved box of width $\sim\eps\rho^2 \de$ and
length $\sim\rho,$  contained in a rectangle of dimension $\sim \rho^2\times \rho$ whose axes are
parallel to the coordinate axes (namely the part of a $\rho^2\delta$-neighborhood of a parabola of curvature $\sim\eps$ containing the
point $(x^0_1,y^0_1)$ which lies  within the horizontal strip  $V_2$).   If $\de\gtrsim 1,$ then
$U_2^{t^0_2,y_1^0,y^0_2\delta}$ is essentially  a rectangular  box of dimension $\sim \eps\rho^2\de\times \rho$
lying
in the same horizontal strip.

Note also that we have chosen to use  the parameter $t^0_2$ in place of using $x^0_2$ here, since with this
choice by \eqref{TV1} the identity
\begin{equation}\label{t2mean}
\tau_{z_1^0}(z_1^0,z_2^0)=t^0_2-x_1^0
\end{equation}
holds true, which will become quite useful in the sequel. We next have to relate the parameters $x_1^0,t^0_2,
y_1^0,y_2^0$ in order to give a precise definition of an admissible pair.

\smallskip
 Here, and in the sequel, we shall always assume that the points $z_1^0,z_2^0$ associated to these parameters
 are given by \eqref{pointsinU}.

\smallskip

\begin{definr}
Let us call a pair
 $(U_1^{x_1^0,y_1^0,\delta},U_2^{t_2^0,y_1^0,y_2^0,\delta})$ an {\it admissible
 pair of type 1
 (at scales $\de,\;\rho$ and contained in $V_1\times V_2$),} if the following two conditions hold
 true:
\begin{align}\label{admissible1}
\frac {C_0^2}4\eps\rho^2\de\le
|\tau_{z_1^0}(z_1^0,z_2^0)|&=|t_2^0-x_1^0|<4 \,C_0^2\eps\rho^2\de,\\
\frac{C_0^2}{512}\eps\rho^2(1\vee
\de)\le|\tau_{z_2^0}(z_1^0,z_2^0)|&< 5\,
C_0^2\eps\rho^2(1\vee \de).\label{admissible2}
\end{align}
By $\cP^{\de}$ we shall denote the set of all admissible pairs of
type 1 at scale $\de$ (and  $\rho$, contained in $V_1\times V_2,$), and by $\cP$
the  corresponding union over all dyadic
scales $\de.$
\end{definr}

Observe that, by \eqref{TV2+TV3}, we have
$\tau_{z_2^0}(z_1^0,z_2^0)-\tau_{z_1^0}(z_1^0,z_2^0)\sim\eps(y_2^0-y_1^0)^2.$
In view of   \eqref{admissible1} and \eqref{yseparation} this shows that condition
\eqref{admissible2} is automatically satisfied, unless $\de\sim 1.$

We remark that it would indeed be more appropriate to denote the sets $\cP^{\de}$ by $\cP^{\de}_{V_1\times V_2},$  but we
want to simplify the notation. In all instances in the rest of the paper $\cP^{\de}$ will be  associated  to a
fixed admissible pair of strips $(V_1,V_2),$ so  that our imprecision will not cause any ambiguity.
The next lemma can be proved by closely following the arguments in the proof of the corresponding Lemma 2.1 in \cite{bmv17}:

\begin{lemnr}\label{sizeofdeltas}
If  $(U_1^{x_1^0,y_1^0,\delta},U_2^{t_2^0,y_1^0,y_2^0,\delta})$ is an admissible
pair of type 1, then for
all  $(z_1,z_2)\in (U_1^{x_1^0,y_1^0,\delta},U_2^{t_2^0,y_1^0,y_2^0,\delta})$ ,
$$
|\tau_{z_1}(z_1,z_2)|\sim_{8} C_0^2 \eps\rho^2\de\mbox{   and   }
|\tau_{z_2}(z_1,z_2)|\sim_{1000} C_0^2 \eps\rho^2(1\vee\de).
$$
\end{lemnr}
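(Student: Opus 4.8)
The plan is to verify the two size estimates by inserting the definitions of the sets $U_1^{x_1^0,y_1^0,\delta}$ and $U_2^{t_2^0,y_1^0,y_2^0,\delta}$ into the transversality functions $\tau_{z_1}(z_1,z_2)$ and $\tau_{z_2}(z_1,z_2)$ from \eqref{TV2} and \eqref{TV3}, and tracking the errors against the base point $z_1^0$. First I would use the reformulation \eqref{U2alt}: for $z_2\in U_2^{t_2^0,y_1^0,y_2^0,\delta}$ we have $0\le \tau_{z_1^0}(z_1^0,z_2)-a^0<\eps\rho^2\delta$, and by \eqref{admissible1} together with $C_0\gg 1$ we get $|\tau_{z_1^0}(z_1^0,z_2)|\sim C_0^2\eps\rho^2\delta$ (the $\eps\rho^2\delta$ fluctuation is swallowed since $C_0^2/4\gg 1$). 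So the whole point is to pass from $\tau_{z_1^0}(z_1^0,z_2)$ to $\tau_{z_1}(z_1,z_2)$ for $z_1\in U_1$, and the difference is exactly
\begin{equation*}
\tau_{z_1}(z_1,z_2)-\tau_{z_1^0}(z_1^0,z_2)=-(x_1-x_1^0)-\eps\bigl[h'(y_1)-h'(y_1^0)-\tfrac{h''(y_1)}{2}(y_2-y_1)+\tfrac{h''(y_1^0)}{2}(y_2-y_1^0)\bigr].
\end{equation*}
One then estimates each piece using \eqref{cubtype}, $|y_1-y_1^0|\lesssim\rho(1\wedge\delta)$, $|x_1-x_1^0+\eps\frac{h''(y_1^0)}{2}(y_1-y_1^0)|\ll\eps\rho^2\delta$, and $|y_2-y_1|\sim C_0\rho$ from \eqref{yseparation}: the $x_1$-term is $\ll\eps\rho^2\delta$ after absorbing the $\eps h''(y_1^0)(y_1-y_1^0)/2$ correction (itself $\lesssim\eps\rho\cdot\rho(1\wedge\delta)\le\eps\rho^2\delta$); the term $\eps(h'(y_1)-h'(y_1^0))\sim\eps\rho|y_1-y_1^0|\lesssim\eps\rho^2(1\wedge\delta)\le\eps\rho^2\delta$; and the remaining bracket, after adding and subtracting, is controlled by $\eps|h''(y_1)-h''(y_1^0)||y_2-y_1|+\eps|h''(y_1^0)||y_2^0-y_1|\lesssim\eps\rho(1\wedge\delta)\cdot\rho\le\eps\rho^2\delta$, using $|h'''|\sim C_3$. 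In all cases the difference is $\ll\eps\rho^2\delta\ll C_0^2\eps\rho^2\delta$, so $|\tau_{z_1}(z_1,z_2)|\sim_8 C_0^2\eps\rho^2\delta$ follows, with the constant $8$ coming from bookkeeping the various $\lesssim$'s.

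For the second estimate, I would use the algebraic identity \eqref{TV2+TV3},
\begin{equation*}
\tau_{z_2}(z_1,z_2)=\tau_{z_1}(z_1,z_2)+O\bigl(\eps(y_2-y_1)^2\bigr)=\tau_{z_1}(z_1,z_2)+O\bigl(C_0^2\eps\rho^2\bigr),
\end{equation*}
with the error term of size $\sim\eps(y_2-y_1)^2\sim C_0^2\eps\rho^2$ by \eqref{yseparation}. Combining with the first estimate: when $\delta\gtrsim 1$ the dominant term is $\tau_{z_1}$ of size $\sim C_0^2\eps\rho^2\delta=C_0^2\eps\rho^2(1\vee\delta)$; when $\delta\ll 1$ the dominant term is the $\eps(y_2-y_1)^2$ piece of size $\sim C_0^2\eps\rho^2=C_0^2\eps\rho^2(1\vee\delta)$, and $\tau_{z_1}$ only contributes a lower-order $\lesssim C_0^2\eps\rho^2\delta$. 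Either way $|\tau_{z_2}(z_1,z_2)|\sim C_0^2\eps\rho^2(1\vee\delta)$, and a careful accounting of the implied constants (the factor $(y_2-y_1)^2$ ranges over $[C_0^2\rho^2/4,C_0^2\rho^2]$, plus the intermediate-point value $|h'''(\eta)|\in[C_3/4,C_3]$, plus the errors above) yields the stated $\sim_{1000}$.

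The main obstacle — such as it is — is not conceptual but bookkeeping: one must be careful that the lower bound in $|\tau_{z_2}(z_1,z_2)|\gtrsim C_0^2\eps\rho^2(1\vee\delta)$ really survives, i.e. that in the case $\delta\ll 1$ the error terms of size $O(\eps\rho^2\delta)$ cannot conspire to cancel the main term $\sim\eps(y_2-y_1)^2$; this is guaranteed because $C_0\gg 1$ forces $C_0^2\eps\rho^2\gg\eps\rho^2\delta$ once $\delta\ll 1$ (indeed $\delta$ dyadic and $\delta<$ some threshold determined by $C_0$). The rest is a direct transcription of the estimates in the proof of Lemma 2.1 of \cite{bmv17}, with $h''(y_1^0)$, $h'''$ now only satisfying the bounds \eqref{cubtype} rather than being a specific polynomial; since those bounds are all that was used there, the argument goes through verbatim with the constants $C_l$ replacing the explicit numerical constants.
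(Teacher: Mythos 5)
Your overall strategy --- control $\tau_{z_1}(z_1,z_2)-\tau_{z_1^0}(z_1^0,z_2)$ against the box width $\eps\rho^2\de$ using \eqref{U2alt} and \eqref{admissible1}, then pass to $\tau_{z_2}$ via \eqref{TV2+TV3} --- is the right one, but two of your estimates fail as stated. First, you repeatedly use $|h''|\lesssim\rho$ near $y_1^0$ (in ``$\eps(h'(y_1)-h'(y_1^0))\sim\eps\rho|y_1-y_1^0|$'' and in bounding the correction $\eps h''(y_1^0)(y_1-y_1^0)/2$ by $\eps\rho\cdot\rho(1\wedge\de)$). This holds only if $|y_1^0|\lesssim\rho$; in general $y_1^0$ is an arbitrary point of $[-1,1]$ and \eqref{cubtype} gives only $|h''(y_1^0)|\le C_3|y_1^0|\le C_3$, so these terms, taken separately, are of size $\eps\rho(1\wedge\de)$, which exceeds $\eps\rho^2\de$ by a factor $1/\rho$ when $\de\le1$. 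What rescues the argument is an exact cancellation of all terms linear in $(y_1-y_1^0)$ with coefficient $h''(y_1^0)$ --- this is the entire reason $U_1$ is a \emph{tilted} box:
$$
\tau_{z_1}(z_1,z_2)-\tau_{z_1^0}(z_1^0,z_2)=-\Big(x_1-x_1^0+\eps\tfrac{h''(y_1^0)}{2}(y_1-y_1^0)\Big)-\tfrac{\eps}{2}h'''(\xi)(y_1-y_1^0)^2-\tfrac{\eps}{2}\big(h''(y_1)-h''(y_1^0)\big)(y_2-y_1),
$$
whose three terms are bounded by $\eps\rho^2\de$, $\tfrac{C_3}{2}\eps\rho^2(1\wedge\de)^2$ and $\tfrac{C_3C_0}{2}\eps\rho^2(1\wedge\de)$ respectively, all $\ll C_0^2\eps\rho^2\de$. (Equivalently, first pass to the adapted coordinates \eqref{newcoord}, where $h$ is replaced by $H$ with $H''(0)=0$; then your pointwise bounds become literally correct.)

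Second, your derivation of $|\tau_{z_2}|\sim C_0^2\eps\rho^2(1\vee\de)$ by deciding which of $\tau_{z_1}$ and the $\Landau(\eps(y_2-y_1)^2)$ correction dominates breaks down in the intermediate regime $\de\sim1$: there both terms have size $\sim C_0^2\eps\rho^2$, they may have opposite signs, and nothing in \eqref{admissible1} alone prevents them from cancelling. This is exactly why the definition of admissibility imposes \eqref{admissible2} as a separate hypothesis (the paper notes it is automatic \emph{unless} $\de\sim1$) --- and your proof never invokes \eqref{admissible2}, which is the tell-tale sign of the gap. The repair is to show that $\tau_{z_2}(z_1,z_2)$ differs from $\tau_{z_2^0}(z_1^0,z_2^0)$ by $\Landau(C_0C_3\eps\rho^2(1\vee\de))$ --- note that the variation of $(h''(y_2)-h''(y_1))(y_2-y_1)$ over $U_1\times U_2$ is only $\Landau(C_0C_3\rho^2)$, with a single factor of $C_0$, since $y_1,y_2$ each move by at most $\rho$ while $|y_2-y_1|\le C_0\rho$ --- and then to invoke the lower bound $\frac{C_0^2}{512}\eps\rho^2(1\vee\de)$ from \eqref{admissible2} at the base point, which beats the $\Landau(C_0)$ variation because $C_0\gg1$.
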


%
%

\medskip
Up to now we focused on the case $|\tau_{z_1}(z_1,z_2)|\lesssim|\tau_{z_2}(z_1,z_2)|.$
For the symmetric case, corresponding to the situation where $|\tau_{z_1}(z_1,z_2)|\gtrsim|\tau_{z_2}(z_1,z_2)|$, by
interchanging the roles of $z_1$ and $z_2$  we define
accordingly  for any $t^0_1, x^0_2\in\J,$  $y^0_1$ the left endpoint of $I_1$ and $y^0_2\in\I_2$  the sets $\widetilde{U}_1^{t_1^0,y_1^0,y_2^0,\delta}$ and $\widetilde{U}_2^{x_2^0,y_2^0,\delta}$
in analogy with our discussion in \cite{bmv17}, and denote the  corresponding admissible pairs  $(\widetilde{U}_1^{t_1^0,y_1^0,y_2^0,\delta}, \widetilde{U}_2^{x_2^0,y_2^0,\delta})$ as {\it admissible pairs of type 2}. We shall skip  the details.\\
By $\tilde \cP^{\de},$ we shall denote the set of all admissible pairs of
 type 2  at scale $\de$  (and $\rho$, contained in $V_1\times V_2,$), and by $\tilde \cP$
the  corresponding unions over all dyadic
scales $\de.$
\medskip
\medskip

In analogy with Lemma \ref{sizeofdeltas}, we have
\begin{lemnr}\label{sizeofdeltas2}
If $(\tilde U_1,\tilde U_2)=({\tilde U}_1^{t^0_1,y_1^0,y_2^0,\delta},{\tilde U_2}^{x^0_2,y_2^0,\delta})\in \tilde \cP^\de$  is an admissible pair of type 2, then for all $(z_1,z_2)\in(\tilde U_1,\tilde U_2)$ we have
$$
|\tau_{z_1}(z_1,z_2)|\sim_{1000} C_0^2 \eps\rho^2(1\vee\de)\mbox{   and   }
|\tau_{z_2}(z_1,z_2)|\sim_8 C_0^2 \eps\rho^2\de.
$$
\end{lemnr}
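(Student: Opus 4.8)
The plan is to reduce this lemma to the already-established Lemma \ref{sizeofdeltas} by exploiting the antisymmetry relation $\tau_z(z_1,z_2)=-\tau_z(z_2,z_1)$ recorded after \eqref{TV1}. Concretely, an admissible pair of type 2, $(\tilde U_1^{t_1^0,y_1^0,y_2^0,\delta},\tilde U_2^{x_2^0,y_2^0,\delta})$, was defined by interchanging the roles of $z_1$ and $z_2$ in the construction of type 1 pairs; compare \eqref{U3c}--\eqref{U4c} with \eqref{U1c}--\eqref{U2c}. Thus the sets $\tilde U_2^{x_2^0,y_2^0,\delta}$ and $\tilde U_1^{t_1^0,y_1^0,y_2^0,\delta}$ play exactly the roles of $U_1^{x_1^0,y_1^0,\delta}$ and $U_2^{t_2^0,y_1^0,y_2^0,\delta}$ respectively, but with the variable names $z_1\leftrightarrow z_2$ swapped and with $a^0$ now equal to $\tau_{z_2^0}(z_2^0,z_1^0)=-\tau_{z_2^0}(z_1^0,z_2^0)$. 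First I would make this correspondence precise: writing $w_1:=z_2$, $w_2:=z_1$, the pair $(\tilde U_2^{x_2^0,y_2^0,\delta},\tilde U_1^{t_1^0,y_1^0,y_2^0,\delta})$, viewed in the $(w_1,w_2)$ variables, satisfies the defining conditions \eqref{whitneybox1} and the admissibility conditions \eqref{admissible1}--\eqref{admissible2} of a type 1 pair, because the conditions \eqref{U3c}--\eqref{U4c} are obtained from \eqref{U1c}--\eqref{U2c} precisely by this substitution, and since $|\tau_{w_2^0}(w_1^0,w_2^0)|=|\tau_{z_1^0}(z_2^0,z_1^0)|=|\tau_{z_1^0}(z_1^0,z_2^0)|$ the role of the two transversality bounds is interchanged in exactly the way \eqref{admissible1}--\eqref{admissible2} prescribe.

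Having set up this dictionary, I would apply Lemma \ref{sizeofdeltas} directly in the $(w_1,w_2)$ variables: for all $(w_1,w_2)$ in the pair one gets $|\tau_{w_1}(w_1,w_2)|\sim_8 C_0^2\eps\rho^2\de$ and $|\tau_{w_2}(w_1,w_2)|\sim_{1000} C_0^2\eps\rho^2(1\vee\de)$. Translating back via $w_1=z_2$, $w_2=z_1$ and using the antisymmetry $\tau_{z}(z_2,z_1)=-\tau_z(z_1,z_2)$ (so that moduli are unchanged under the swap), this reads $|\tau_{z_2}(z_1,z_2)|\sim_8 C_0^2\eps\rho^2\de$ and $|\tau_{z_1}(z_1,z_2)|\sim_{1000}C_0^2\eps\rho^2(1\vee\de)$, which is exactly the assertion of Lemma \ref{sizeofdeltas2}.

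The only genuine work is the bookkeeping in the first step: one must check carefully that the type-2 sets really are the $z_1\leftrightarrow z_2$ images of the type-1 sets, including that the auxiliary parameter $a^0$ flips sign correctly and that the asymmetric vertical widths ($\rho(1\wedge\de)$ for one set, $\rho$ for the other) get attached to the correct variable after the swap. Since the excerpt states that the type-2 sets are defined "in analogy with our discussion in \cite{bmv17}" and explicitly by interchanging the roles of $z_1$ and $z_2$, this verification is purely a matter of unwinding the definitions, and I expect no real obstacle — the main point to be careful about is that in a type-2 pair it is $y_2^0\in\I_2$ that is allowed to vary while $y_1^0$ is the left endpoint of $I_1$ (the mirror image of the type-1 convention), so that the "$1\wedge\de$" factor sits on the $y_1$-width in Lemma \ref{sizeofdeltas2}, consistently with the roles of $z_1$ and $z_2$ being swapped relative to Lemma \ref{sizeofdeltas}.
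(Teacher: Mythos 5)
Your proposal is correct and follows exactly the route the paper intends: the paper states Lemma \ref{sizeofdeltas2} ``in analogy with Lemma \ref{sizeofdeltas}'' without further proof, having explicitly introduced the type-2 sets by interchanging the roles of $z_1$ and $z_2$ and having recorded the antisymmetry $\tau_z(z_1,z_2)=-\tau_z(z_2,z_1)$, so your reduction to Lemma \ref{sizeofdeltas} via the substitution $w_1=z_2$, $w_2=z_1$ is precisely the argument being invoked. The bookkeeping you single out (the $\rho(1\wedge\de)$-width attaching to $\tilde U_2$, i.e.\ to the $y_2$-variable, and the sign flip $a^0=-\tau_{z_2^0}(z_1^0,z_2^0)$) is consistent with \eqref{U3c}--\eqref{U4c}, and the moduli are unchanged under the swap, so the two asserted equivalences come out with the constants $1000$ and $8$ attached to the correct transversalities.
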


\setcounter{equation}{0}
\section{The bilinear estimates}\label{sect:bilin}

\subsection{A prototypical admissible pair in the curved box case and the crucial scaling
transformation}\label{proto}

In this section we shall present a  \lq\lq prototypical"
case where  $U_1$ and $U_2$ will form an admissible pair of type 1 centered at  $z_1^0=0\in U_1$ and $z_2^0\in
U_2,$  with  $\eps\sim1,\rho\sim1$ and $\delta\ll1$, i.e., $|y_1^0-y_2^0|\sim 1,$ and $|\tau_{z_2^0}(z_1^0,z_2^0)|\sim 1$  but
$|\tau_{z_1^0}(z_1^0,z_2^0)|\sim \delta\ll 1.$  This means that we shall be in the curved box case.
 As we  will show in Subsection \ref{scaling transform} in
detail, we can
always reduce to this particular  situation
when the two transversalities  $\tau_{z_2^0}(z_1^0,z_2^0)$ and $\tau_{z_1^0}(z_1^0,z_2^0)$
are of quite different sizes.

Fix  a small number $0<c_0\ll1$ ($c_0=10^{-10}$ will, for instance, work). Assume that
$0<\delta\le1/10,$ and put
\begin{align}
	U_1:=&[0,c_0^2\delta)\times [0,c_0\delta) \label{U1}\\
	U_2:=&\{(x_2,y_2):0\le y_2-b< c_0,0\le x_2+F'(y_2)-a<c_0^2\delta\},\label{U2}
\end{align}
where $|b|\sim_21$, $|a|\sim_4\delta$ and $F$ is a function of cubic type in the sense of \eqref{cubtype}, i.e.,
\begin{eqnarray}\label{cuberrorF}
\begin{cases}  & F(0)=F'(0)=F''(0)=0,  \\
&|F'''(y')|\sim C_3,\\
&|F^{(l)}(y')|\le C_l \quad\text{for all}\   l\ge 4.\\
\end{cases}
\end{eqnarray}
\noindent\bf Remark. \rm Note that in the case $\eps=1$, if we set $C_0=1/{c_0},$  $\rho=c_0,$ then any admissible pair
$(U_1,U_2)=(U_1^{0,0,\delta},U_2^{a,0,b,\delta})$, as in \eqref{whitneybox1}, would satisfy \eqref{U1} and \eqref{U2} with the above conditions on $a$ and $b$ and suitable $F$.
\medskip

Our bilinear result in this prototypical case  is as follows:
\begin{thmnr}[prototypical case]\label{bilinear}
Let $p>5/3,$ and let  $U_1,U_2$ be as in \eqref{U1}, \eqref{U2}. Assume further that  $\phi(x,y)= xy+F(y),$   where $F$ is a real-valued  smooth perturbation function of cubic type, i.e., satisfying estimates \eqref{cuberrorF}, and denote by
$$
\ext_{U_i} f(\xi)=\int_{U_i} f(x,y) e^{-i(\xi_1 x+\xi_2 y+\xi_3\phi(x,y))} \eta(x,y) \, dx dy, \qquad i=1,2,
$$
the  corresponding Fourier extension operators.  Then, if the constants $c_0$ and $\de\ll1$ in \eqref{U1}, \eqref{U2}  are  sufficiently small,
\begin{align}\label{bilinest}
	 \|\ext_{U_1}(f_1),\ext_{U_2}(f_2)\|_p \leq C_p \, \delta^{\frac{7}{2}-\frac{6}{p}}
\|f_1\|_2\|f_2\|_2
 \end{align}
 for every $f_1\in L^2(U_1)$ and every  $f_2\in L^2(U_2),$
where the constant $C_p$  will  only depend on $p$ and the constants $C_l$ in \eqref{cuberrorF}.
\end{thmnr}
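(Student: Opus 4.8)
The strategy is to reduce the bilinear estimate \eqref{bilinest} to the model bilinear estimate for the hyperbolic paraboloid $z=xy$ (or equivalently the estimate already established in \cite{bmv17} for the special case $F(y)=y^3/3$), via an anisotropic \emph{scaling transformation} that renormalizes the curved box $U_2$ together with the small box $U_1$. The first step is to introduce the rescaling $x=\delta^{a_1}\tilde x$, $y=\delta^{a_2}\tilde y$ (with an additional affine shift of the $x$--variable and of $y$ by $b$ to bring $U_2$ near the origin), chosen so that $U_1$ becomes (comparable to) a box of unit size and $U_2$ becomes the $c_0$--neighborhood of the graph of a \emph{rescaled} cubic-type function $\tilde F$ over an interval of unit length. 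The key point is that the phase $\xi_1 x+\xi_2 y+\xi_3\phi(x,y)$ transforms, after a linear change in the frequency variables $\xi$ dictated by the scaling, into a phase of exactly the same form $\tilde\xi_1\tilde x+\tilde\xi_2\tilde y+\tilde\xi_3\tilde\phi(\tilde x,\tilde y)$ with $\tilde\phi(\tilde x,\tilde y)=\tilde x\tilde y+\tilde F(\tilde y)$; one checks using \eqref{cuberrorF} that $\tilde F$ is again of cubic type with constants controlled by the $C_l$ (here is where one uses $F(0)=F'(0)=F''(0)=0$, so that the Taylor remainder scales correctly, and $|F'''|\sim C_3$, which is scale-invariant at third order).

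Once the geometry is normalized, the second step is to apply the known bilinear restriction estimate for cubic-type perturbations of the hyperbolic paraboloid on \emph{transversal} pieces --- namely the $p>5/3$ bilinear estimate quoted around \eqref{bil1}--\eqref{trans} in Section~\ref{sect:admissible}, which holds with a constant depending only on the lower bound for the transversality quantity $|\Gamma^\phi_z(z_1,z_2)|$ and on upper bounds for the derivatives of the phase. On the rescaled pieces $\tilde U_1,\tilde U_2$ the separation in the $y$--variable is $\sim c_0$ and the relevant transversality $\tau$ is $\sim 1$ (this is precisely the content of the admissibility conditions \eqref{admissible1}--\eqref{admissible2} and Lemma~\ref{sizeofdeltas}, read off at scales $\eps\sim\rho\sim1$, with $C_0=1/c_0$), so the hypothesis of that bilinear estimate is met uniformly; hence $\|\ext_{\tilde U_1}(g_1)\,\ext_{\tilde U_2}(g_2)\|_p\lesssim_p\|g_1\|_2\|g_2\|_2$ with a constant depending only on $c_0$ and the $C_l$.

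The final step is bookkeeping: undo the scaling. Each of the three factors $\ext_{U_i}f_i$, the $L^p$ norm on the frequency side, and the $L^2$ norms on the physical side picks up an explicit power of $\delta$ determined by the Jacobians $\delta^{a_1+a_2}$ of the spatial change of variables, the Jacobian of the frequency change of variables, and the exponents $p$, $2$. Collecting these powers must reproduce exactly the stated gain $\delta^{\frac72-\frac6p}$; this is a routine but careful exponent count, and I would present it as a short displayed computation. The main obstacle is the first step: verifying that the anisotropic rescaling does not merely normalize the \emph{shape} of $U_1$ and $U_2$ but genuinely transforms the phase $\phi$ into another phase of the admissible form with uniformly controlled cubic-type constants --- in particular that the curvature of the parabola-like boundary of $U_2$ rescales to a quantity of size $\sim 1$ rather than degenerating, and that the new cubic remainder $\tilde F$ stays in the class \eqref{cuberrorF}. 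Once that normalization is in hand, the rest follows from the already-available transversal bilinear estimate plus scaling, exactly as in \cite{bmv17}, and the only genuinely new input here is checking that a \emph{general} finite-type perturbation, after the reduction of Sections~\ref{sec:cubred}--\ref{sect:admissible}, still feeds correctly into this scaling machinery.
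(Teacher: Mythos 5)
There is a genuine gap, and it sits exactly at the point you flag as the ``main obstacle'' and then assume away. No anisotropic scaling $x=\delta^{a_1}\tilde x$, $y=\delta^{a_2}\tilde y$ can simultaneously turn $U_1=[0,c_0^2\delta)\times[0,c_0\delta)$ and $U_2$ (which has $y$-extent $c_0$, independent of $\delta$) into unit-sized sets: their $y$-extents differ by a factor $\delta$. The scaling that actually works (and is the one used in the paper) is $x=\delta\bar x$, $y=\bar y$, which normalizes the widths but leaves $U_1^s$ of $\bar y$-extent $c_0\delta$ and, more importantly, turns the phase into $\bar x\bar y+F(\bar y)/\delta$. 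This rescaled perturbation is \emph{not} of cubic type with uniform constants: $|F'''(\bar y)|/\delta\sim C_3/\delta$ and the Hessian entry $F''(\bar y_2)/\delta\sim C_3/\delta$ on $U_2^s$ blow up as $\delta\to0$. Moreover, the transversality seen from $U_1$ is genuinely small, $|\Gamma^\phi_{z_1}(z_1,z_2)|\sim\delta$ (cf.\ Lemma \ref{sizeofdeltas} and \eqref{transscaled}), so the hypothesis of the off-the-shelf transversal bilinear estimate of \cite{lee05} is not satisfied with constants independent of $\delta$. The paper states this explicitly: one \emph{cannot} directly reduce \eqref{bilinest} to Lee's theorem, because that route does not produce the optimal power of $\delta$.

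Quantitatively, the Jacobian bookkeeping you defer to the end cannot close the argument either: with the scaling $x=\delta\bar x$, $y=\bar y$ the combined effect of the spatial and frequency Jacobians and the $L^2$ normalizations yields a factor $\delta^{1-2/p}$ times the bilinear constant of the rescaled problem, so the rescaled problem must itself contribute an additional gain $\delta^{5/2-4/p}$ (which is a positive power for all $p>5/3$). This extra gain is not ``routine''; it is the substance of the proof. What the paper does instead is introduce the \emph{normalized} transversality quantities $TV^s_i$ in \eqref{transnew}, in which the large factor $\sim1/\delta$ appearing in the determinant $\det({}^t(\nabla\phi^s(\bar z_1)-\nabla\phi^s(\bar z_2)),H\phi^s(\bar z_2)\cdot{}^t\omega)$ is cancelled by the denominator $|H\phi^s(\bar z_2)\cdot{}^t\omega|\sim1/\delta$, giving $|TV^s_i|\sim1$ uniformly (Lemma \ref{transvscaled}); one then runs the refined wave-packet bilinear argument of Theorem 3.3 in \cite{bmv17}, which exploits these normalized transversalities together with the thinness of $U_1^s$ and $U_2^s$ to extract the remaining power of $\delta$. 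Your proposal, by replacing this step with a uniform application of the standard transversal bilinear estimate after a (nonexistent) normalization, would at best yield $\delta^{1-2/p}$, which is strictly weaker than $\delta^{7/2-6/p}$ and insufficient for the summation over $\delta$ later in the paper.
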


As in \cite{bmv17}, it  turns out that  one cannot directly reduce the bilinear
Fourier extension estimates  in \eqref {bilinest}   to Lee's  Theorem 1.1 in \cite{lee05}, since that would not
give us the optimal  dependence on $\delta$. We shall  therefore
have to be more precise about the required transversality conditions. However, once we have  established the correct transversality conditions in Lemma \ref{transvscaled} below (which is the direct analogue of Lemma 2.3 in \cite{bmv17}),    we can indeed apply our  arguments  from  the proof of Theorem 3.3 in \cite{bmv17} also in the  present situation and  arrive at the  desired  bilinear estimates \eqref{bilinest}.
\medskip

The crucial step will again consist in the following scaling: we introduce new coordinates $(\bar x,\bar y)$ be writing $x=\de \bar x, y=\bar y,$ and then re-scale the phase function $\phi$ by putting
$$\phi^s(\bar x,\bar y):=\frac{1}{\de}\phi(\de \bar x, \bar y)=\bar x \bar y+\frac {F(\bar y)}{\de}.$$
 Denote by $U_i^s$ the corresponding re-scaled domains, i.e.,
 \begin{eqnarray*}
U^s_1&=&\{(\bar x_1,\bar y_1): 0\le \bar x_1< c_0^2 ,\ 0\le  \bar y_1< c_0\delta   \},\\
U^s_2&=&\{(\bar x_2,\bar y_2): 0\le  \bar x_2+\frac {F'(\bar y_2)}{\de}-\bar a< c_0^2, \ 0\le  \bar y_2-\bar b< c_0\},
\end{eqnarray*}
where $c_0$ is small and $|\bar a|=|a/\de|\sim 1$ and $\bar b=b\sim 1.$
By $S_i^s, i=1,2,$ we denote the corresponding   scaled surface patches
$$
S_i^s:=\{(\bar x,\bar y,\phi^s(\bar x,\bar y)): (\bar x,\bar y)\in U_i^s\}.
$$

Observe that
\begin{align*}
	\nabla\phi^s(\bar x,\bar y)=(\bar y,\bar x+F'(\bar y)/\de),
\end{align*}
and
\begin{align*}
	H\phi^s(\bar x,\bar y)=
	\left(\begin{array}{cc}
    0  & 1  \\
    1 & F''(\bar y)/\de
\end{array}\right),
\end{align*}
so that in particular
\begin{align}\label{bddgrad}
	|\nabla\phi^s(\bar z)|\lesssim 1
\end{align}
for all $\bar  z\in U^s_1\cup U^s_2$.

Assume next that  $\bar  z_1\in U^s_1$ and $\bar  z_2\in U^s_2.$  Since $|\bar y_1|\le c_0\de,|\bar y_2|\sim 1,$  we see that
 \begin{equation}\label{Fprimes}
 \begin{cases}  |\frac{F'(\bar y_1)}\de |\sim\frac{|F'''(\eta_1)\bar y_1^2|}\de \lesssim C_3c_0^2\frac{\de^2}\de=c_0^2 C_3\de, \quad
\frac{|F''(\bar y_1)|}\de \sim \frac{|F'''(\tilde\eta_1)\bar y_1|}\de \lesssim c_0C_3,\\
 |\frac{F'(\bar y_2)}\de |\sim\frac{|F'''(\eta_2)\bar y_2^2|}\de \sim\frac{ C_3}\de, \quad
\frac{|F''(\bar y_2)|}\de \sim \frac{|F'''(\tilde\eta_2)\bar y_2|}\de \sim \frac{C_3}\de
\end{cases}
\end{equation}
(for suitable choices of intermediate points $\eta_i,\tilde \eta_i$).  Moreover, we then also see that
\begin{equation}\label{graddiffs}
\nabla\phi^s(\bar z_2)-\nabla\phi^s(\bar z_1) =\big(\bar y_2-\bar y_1,\bar x_2+\tfrac {F'(\bar y_2)}{\de}-(\bar x_1+\tfrac {F'(\bar y_1)}{\de})\big)
	=(\bar b, \bar a)+\Landau(c_0).
\end{equation}
	
Following further on the proof of  Lemma 2.3 in \cite{bmv17}, assume that we translate the two patches of surface  $S_1^s$ and $S_2^s$ in such a way  that the two points $\bar z_1$ and $\bar z_2$  coincide after translation, and  assume that the vector
 $\omega=(\omega_1,\omega_2)$ is tangent to the corresponding   intersection curve $\gamma(t)$  at this point. Then \eqref{graddiffs} shows that we may assume without loss of generality that
\begin{align*}
	\omega=(-\bar a,\bar b)+\Landau(c_0).
\end{align*}
In combination with \eqref{Fprimes} this implies that
\begin{align*}
	H\phi^s(\bar z_i)\cdot\trans\omega = \left(\begin{array}{cc}
    0  & 1  \\
    1 & F''(\bar y_i)/\de
\end{array}\right)
\left(
\begin{array}{cc}
  -\bar a+ \Landau(c_0)  \\
\bar b+ \Landau(c_0)  \\
\end{array}
\right).
\end{align*}
Thus, if $i=1,$ then by \eqref{Fprimes},
\begin{equation}\label{H1}
H\phi^s(\bar z_1)\cdot\trans\omega =
\left(
\begin{array}{cc}
  \bar b+ \Landau(c_0) \\
-\bar a  + \Landau(c_0) \\
\end{array}
\right)
\quad \text{ and   }\quad  |H\phi^s(\bar z_1)\cdot\trans\omega| \sim 1,
\end{equation}
and if $i=2,$ then
\begin{equation}\label{H2}
H\phi^s(\bar z_2)\cdot\trans\omega =
\left(
\begin{array}{cc}
  \bar b+ \Landau(c_0) \\
-\bar a  + \bar b F''(\bar y_2)/\de+\Landau(c_0)/\de \\
\end{array}
\right)
\quad \text{ and   }\quad  |H\phi^s(\bar z_1)\cdot\trans\omega| \sim 1/\de,
\end{equation}
if $\de\ll1$ is sufficiently small.
\smallskip

Following \cite{bmv17}, the  refined transversalities that we need to control are given by
\begin{equation}\label{transnew}
\Big|TV^s_i(\bar z_1,\bar z_2)\Big|:=\Big|
\frac{\det(\trans (\nabla\phi^s(\bar
z_1)-\nabla\phi^s( \bar z_2)),  H\phi^s( \bar z_i)\cdot\trans\omega)}
{\sqrt{1+|\nabla\phi^s( \bar z_1)|^2}\sqrt{1+|\nabla\phi^s(\bar z_2)|^2}\, |H\phi^s(\bar
z_i)\cdot\trans\omega|}\Big|, \qquad i=1,2.
\end{equation}
	
	But, if $i=1,$ then by \eqref{graddiffs}, \eqref{H2}, \eqref{bddgrad}  and \eqref{Fprimes} we see that
$$
	|\det(\trans (\nabla\phi^s(\bar
z_1)-\nabla\phi^s( \bar z_2)),  H\phi^s( \bar z_1)\cdot\trans\omega)|=\Big|\det\left(
\begin{array}{ccc}
    \bar b+\Landau(c_0)  &  \bar b+\Landau(c_0)  \\
  \bar a+\Landau(c_0)    &  -\bar a+\Landau(c_0) \\
\end{array}
\right)
\Big|\sim 1,
$$
hence $\Big|TV^s_1(\bar z_1,\bar z_2)\Big|\sim1$.

 And, if $i=2,$ then by \eqref{graddiffs}, \eqref{H1}, \eqref{bddgrad}  and \eqref{Fprimes} we have
$$
	|\det(\trans (\nabla\phi^s(\bar
z_1)-\nabla\phi^s( \bar z_2)),  H\phi^s( \bar z_2)\cdot\trans\omega)|=\Big|\det\left(
\begin{array}{ccc}
    \bar b+\Landau(c_0)  &  \bar b+\Landau(c_0)  \\
  \bar a+\Landau(c_0)    &  -\bar a+\bar b F''(\bar y_2)/\de +\Landau(c_0)/\de \\
\end{array}
\right)
\Big|,
$$
hence also $\Big|TV^s_2(\bar z_1,\bar z_2)\Big|\sim (1/\de)/(1/\de)\sim 1,$  provided $\de$ and $c_0$ are
sufficiently small.
\medskip

We have thus proved the following lemma, from which Theorem \ref{bilinear} can easily be derived, as explained before,  by applying  the arguments from \cite{bmv17}:

\begin{lemnr}\label{transvscaled}
The transversalities for the scaled patches of surface  $S_i^s, \,i=1,2,$ satisfy
$$
\Big|TV^s_i(\bar z_1,\bar z_2)\Big|\sim 1, \quad i=1,2.
$$
\end{lemnr}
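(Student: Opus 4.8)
The lemma is simply a consolidation of the estimates established in the preceding discussion, so the plan is to organize them into one self-contained chain, keeping careful track of which error terms are of size $\Landau(c_0)$ and which are of size $\Landau(c_0)/\de$. First I would record the explicit formulae for $\nabla\phi^s$ and for the Hessian $H\phi^s$ of $\phi^s(\bar x,\bar y)=\bar x\bar y+F(\bar y)/\de$; the point to keep in mind is that the only entry that can be large is $F''(\bar y)/\de$ in the lower right corner of $H\phi^s$, and that this is genuinely of size $\sim1/\de$ only when $\bar y\sim1$, that is on $U_2^s$, whereas on $U_1^s$, where $|\bar y_1|\le c_0\de$, it is merely $\Landau(c_0)$.

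The next step is to establish the size estimates \eqref{Fprimes}. Since $F(0)=F'(0)=F''(0)=0$ and $|F'''|\sim C_3$ on the relevant range by \eqref{cuberrorF}, Taylor's theorem yields $|F'(\bar y)|\sim C_3\bar y^2$ and $|F''(\bar y)|\sim C_3|\bar y|$ (with suitable intermediate points); inserting $|\bar y_1|\le c_0\de$ and $|\bar y_2-\bar b|<c_0$ with $|\bar b|\sim1$ then gives $|F'(\bar y_1)/\de|\lesssim c_0^2C_3\de$, $|F''(\bar y_1)/\de|\lesssim c_0C_3$, $|F'(\bar y_2)/\de|\sim C_3/\de$ and $|F''(\bar y_2)/\de|\sim C_3/\de$. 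Combining these with the \emph{definitions} of $U_1^s$ and $U_2^s$ — in which it is the combination $\bar x_i+F'(\bar y_i)/\de$, not $\bar x_i$ alone, that is constrained — one reads off at once the gradient bound \eqref{bddgrad} and the gradient-difference identity \eqref{graddiffs}, namely $\nabla\phi^s(\bar z_2)-\nabla\phi^s(\bar z_1)=(\bar b,\bar a)+\Landau(c_0)$.

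Using \eqref{graddiffs} I would take the tangent direction of the intersection curve to be $\omega=(-\bar a,\bar b)+\Landau(c_0)$ (orthogonal to $(\bar b,\bar a)$; rescaling $\omega$ is irrelevant since $TV_i^s$ is homogeneous of degree zero in $\omega$), and then compute $H\phi^s(\bar z_i)\cdot\trans\omega$ by a single matrix multiplication. For $i=1$ the entry $F''(\bar y_1)/\de$ is only $\Landau(c_0)$, which gives \eqref{H1}: a vector $(\bar b,-\bar a)+\Landau(c_0)$ of length $\sim1$ because $|\bar a|,|\bar b|\sim1$. For $i=2$ that entry is $\sim C_3/\de$, so the second component becomes $-\bar a+\bar b\,F''(\bar y_2)/\de+\Landau(c_0)/\de$ with dominant term $\bar b\,F''(\bar y_2)/\de\sim C_3/\de$, which gives \eqref{H2}: a vector of length $\sim1/\de$. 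Finally I would insert everything into the definition \eqref{transnew}: the numerator of $|TV_i^s|$ is the modulus of the $2\times2$ determinant whose columns are $\pm(\nabla\phi^s(\bar z_1)-\nabla\phi^s(\bar z_2))$ and $H\phi^s(\bar z_i)\cdot\trans\omega$, and the denominator is $\sqrt{1+|\nabla\phi^s(\bar z_1)|^2}\,\sqrt{1+|\nabla\phi^s(\bar z_2)|^2}\,|H\phi^s(\bar z_i)\cdot\trans\omega|$. For $i=1$ the determinant equals $-2\bar a\bar b+\Landau(c_0)\sim1$, while by \eqref{bddgrad} the two square roots are $\sim1$ and by \eqref{H1} the last factor is $\sim1$, so $|TV_1^s|\sim1$. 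For $i=2$ the leading term of the determinant is $\bar b^2F''(\bar y_2)/\de\sim C_3/\de$, all remaining contributions being $\Landau(1)+\Landau(c_0/\de)$, while in the denominator the two square roots are again $\sim1$ and the last factor is $\sim1/\de$ by \eqref{H2}; the two powers $1/\de$ cancel and $|TV_2^s|\sim1$.

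The single delicate point — and the reason, as the text remarks, that one cannot simply invoke Lee's transversal bilinear theorem from \cite{lee05} — is exactly the case $i=2$: one must check that the numerator determinant and the normalizing factor $|H\phi^s(\bar z_2)\cdot\trans\omega|$ are \emph{both} of order precisely $1/\de$, so that their quotient stabilizes at a $\de$-independent constant rather than degenerating as $\de\to0$. This forces $c_0$ to be taken small depending only on the cubic-type constants $C_3,C_4,\dots$ of \eqref{cuberrorF} (so that the $\Landau(c_0/\de)$ errors are dominated by $\bar b^2F''(\bar y_2)/\de$), together with $\de\ll1$ so that the $\Landau(1)$ errors are dominated by the $1/\de$ terms. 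It is this cancellation of the two $1/\de$ factors that produces a transversality of size $\sim1$ uniformly in $\de$, which is precisely what is needed to run the argument from the proof of Theorem 3.3 in \cite{bmv17} and obtain the sharp power $\de^{7/2-6/p}$ in Theorem \ref{bilinear}.
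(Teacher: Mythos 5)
Your proposal is correct and follows essentially the same route as the paper: the same computation of $\nabla\phi^s$ and $H\phi^s$, the Taylor estimates \eqref{Fprimes}, the gradient difference \eqref{graddiffs}, the choice $\omega=(-\bar a,\bar b)+\Landau(c_0)$, and the two determinant computations with the $1/\de$ cancellation in the case $i=2$. Your closing remark on why the $i=2$ case is the delicate one (and why one cannot simply quote Lee's theorem) matches the paper's own motivation for introducing the refined transversalities $TV^s_i$.
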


\color{black}

We should again like to mention that  estimate \eqref{bilinest} could alternatively also be deduced from Candy's Theorem 1.4 in \cite{can17}, after applying the crucial scaling in $x$ that we used in the first step of  our proof.\\

\medskip
\subsection{Reduction to the prototypical case}\label{scaling transform}
Our next goal  will be to establish the following analogues of the bilinear
 Fourier extension  estimates in Theorem 3.1 of \cite{bmv17}:
\begin{thmnr}\label{bilinear2}
Let $p>5/3,$ $q\ge2.$ Then, for every admissible pair  $(U_1,U_2)\in \cP^\de $
at scale $\de,$  the following bilinear estimates hold true:
If $\de>1$ and $\epsilon \de\rho^2\le 1$, then
  \begin{align*}
	 \|\ext_{U_1}(f)\ext_{U_2}(g)\|_p
	\leq C_{p,q}  (\epsilon\delta\rho^3)^{2(1-\frac 1p-\frac 1q)}\|f\|_q\|g\|_q.
 \end{align*}
If  $\de\le 1,$ then
 \begin{align*}
	 \|\ext_{U_1}(f)\ext_{U_2}(g)\|_p
	\leq C_{p,q}\, \,(\epsilon\rho^3)^{2(1-\frac 1p-\frac 1q)} \,  \delta^{5-\frac 3q-\frac 6p}\|f\|_q\|g\|_q.
 \end{align*}
The  constants in these estimates are independent of the given admissible pair, of $\eps, \rho$ and of $\de.$ The same
 estimates are valid for admissible pairs
 $(\tilde U_1,\tilde U_2)\in \tilde\cP^\de $ of type 2.
\end{thmnr}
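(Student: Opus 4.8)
The plan is to reduce every admissible pair $(U_1,U_2)\in\cP^\de$ to the prototypical situation of Theorem \ref{bilinear} by a sequence of affine normalizations, keeping track of the Jacobian factors at each stage; the non-affine ``curved box'' geometry is exactly what Theorem \ref{bilinear} is designed to absorb. First I would use the change of coordinates already set up in Section \ref{sect:admissible} (translating $z_1^0$ to the origin and applying the shear \eqref{newcoord}) to assume $z_1^0=0$ and that the phase has the form $\phi(x,y)=xy+\eps H(y)$ with $H$ of cubic type, the sets $U_1,U_2$ then being given by \eqref{whitneybox1}. Next I would rescale in $y$ by $\rho$ and in $x$ by $\eps\rho^2$, i.e.\ set $x=\eps\rho^2 X$, $y=\rho Y$, and divide the phase by $\eps\rho^3$; a direct computation shows the rescaled phase is again $XY$ plus a cubic-type perturbation (with $\eps$ effectively set to $1$ and the role of $\de$ unchanged, using \eqref{cubtype} and \eqref{admissible1}--\eqref{admissible2}), and the rescaled boxes $U_i^s$ match \eqref{U1}, \eqref{U2} with constants $c_0=1/C_0$ as noted in the Remark following \eqref{U2}. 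The Jacobian of this rescaling contributes a factor $(\eps\rho^3)$ to each of $\ext_{U_1}f$, $\ext_{U_2}g$ as $L^2$-normalized operators, and more generally, by interpolating the $L^2$-based estimate of Theorem \ref{bilinear} against the trivial $L^1 \to L^\infty$ bound (or directly by the scaling of the $L^q$-norms), one picks up the advertised power $(\eps\rho^3)^{2(1-1/p-1/q)}$ in the $\de\le1$ case, together with the factor $\de^{5-3/q-6/p}$ coming from $\de^{7/2-6/p}$ in \eqref{bilinest} after the same $L^q$ interpolation.

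For the case $\de>1$ the box $U_2$ is, by the Note after \eqref{U2c} and Lemma \ref{sizeofdeltas}, essentially a genuine rectangular box of dimensions $\eps\rho^2\de\times\rho$ on which both transversalities are of the comparable size $\eps\rho^2\de$; here one does not need the refined $\de$-dependence, so I would instead invoke the standard bilinear restriction estimate for the hyperbolic paraboloid with transversality of fixed size (as recalled in Subsection \ref{pairs of sets}, following \cite{lee05},\cite{v05}) applied to the affine images of $U_1,U_2$, and then track the scaling factors: rescaling so that both boxes become unit-sized turns the $L^2$ estimate into the claimed $(\eps\de\rho^3)^{2(1-1/p-1/q)}$ after the $L^q$ interpolation. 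The point is that when $\de\gtrsim1$ the geometry is affinely equivalent to two transversal unit caps, so no new idea beyond bookkeeping is required.

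The main obstacle is the $\de\ll1$ regime, and it is handled precisely by Theorem \ref{bilinear}: the difficulty there is that $U_2$ is not affinely a rectangle but a $\eps\rho^2\de$-neighborhood of a parabola of curvature $\sim\eps$, so a single affine map cannot simultaneously normalize $U_1$ and $U_2$, and a naive application of Lee's theorem loses the sharp power of $\de$. Theorem \ref{bilinear}, via the anisotropic scaling $x=\de\bar x$ and Lemma \ref{transvscaled}, exactly recovers the optimal exponent $7/2-6/p$ in the $L^2$ setting; the remaining work in the present theorem is then only (i) verifying that the reductions from Section \ref{sect:admissible} put a general $(U_1,U_2)\in\cP^\de$ into the prototypical form with the constants in the right ranges, using \eqref{admissible1}, \eqref{admissible2}, \eqref{yseparation} and Lemma \ref{sizeofdeltas}, and (ii) doing the $L^2 \to L^q$ interpolation against the trivial bound $\|\ext_{U_1}f\,\ext_{U_2}g\|_\infty \le |U_1|^{1/q'}|U_2|^{1/q'}\|f\|_q\|g\|_q$ and collecting the scaling Jacobians, which yields $\delta^{5-3/q-6/p}$ and the factor $(\eps\rho^3)^{2(1-1/p-1/q)}$. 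Finally, the statement for type-2 pairs follows verbatim by interchanging the roles of $z_1$ and $z_2$, since $\tau_z(z_1,z_2)=-\tau_z(z_2,z_1)$ and the definitions of $\widetilde U_1,\widetilde U_2$ are the mirror images of $U_1,U_2$.
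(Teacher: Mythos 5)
Your proposal is correct and follows essentially the same route as the paper: reduce to the normalized coordinates of \eqref{newcoord}, apply the anisotropic scaling $x\mapsto \eps\rho^2(1\vee\de)x'$, $y\mapsto\rho y'$ (Lemma \ref{U'}), invoke Lee's theorem for $\de\gtrsim 1$ and Theorem \ref{bilinear} for $\de\ll1$, and pass from $L^2$ to $L^q$ via H\"older using $|U_1|\sim\de^2$, $|U_2|\sim\de$, which indeed converts $\de^{7/2-6/p}$ into $\de^{5-3/q-6/p}$ and produces the Jacobian factor $(\eps\rho^3)^{2(1-1/p-1/q)}$. The exponent bookkeeping is accurate and no step is missing.
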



Fix $p>5/3$ and $q\ge 2,$  and
assume that
$U_1=U_1^{x_1^0,y_1^0,\delta}$ and $U_2= U_2^{t_2^0,y_1^0,y^0_2,\delta}$ form an admissible pair
of
type 1. We shall only discuss the  case of admissible pairs of type 1;  the type 2 case can be
handled in the same way by symmetry.

We shall see that  the bilinear estimates associated to the  sets
$U_1,U_2$ can easily be
reduced by means of a suitable
affine-linear transformation to either the  classical bilinear estimate in \cite{lee05},
when $\delta\ge 1/10,$
or
to the estimate for the  special  ``prototype'' situation  given in  Subsection
\ref{proto}, when $\delta\le 1/10.$

We first change  to the coordinates $(x'',y'')$ introduced in \eqref{newcoord}, which allows to reduce to the case where $(z_1^0)''=0$ and
$(z_2^0)''=(x_2^0-x_1^0+\eps \frac{h''(y_1^0)}2 (y_2^0-y_1^0), y_2^0-y_1^0).$ Recall, however, that we need here  to replace our original perturbation  $h(y)$ by  the cubic type perturbation $H(y'')$ (compare \eqref{cuberror}).   In these coordinates, $U_1$ corresponds to the set
\begin{eqnarray*}
U''_1&:=&\{(x''_1,y''_1): 0\le y''_1< \rho(1\wedge\delta),\ 0\le x''_1< \eps\rho^2\de  \},
\end{eqnarray*}
and $U_2$ to the set
\begin{eqnarray*}
	U''_2=\{(x''_2,y''_2): 0\le x''_2+\eps H'(y''_2)-a^0<\eps\rho^2\delta, \ 0\le y''_2-(y''_2)^0<\rho\},
\end{eqnarray*}
where $a^0:=t_2^0-x_1^0$  and $(y''_2)^0:=y_2^0-y_1^0\sim C_0 \rho$ (compare \eqref{U2alt} and \eqref{yseparation}, and note that $\tau_{0}(0,z''_2)= x''_2+\eps H'(y''_2)$ in the coordinates $(x'',y'')$).
Recall also from \eqref{admissible1} that $|a^0|\sim C_0^2\eps \rho^2\de.$

\medskip

This suggests to apply the following  {\bf scaling:}
we change to yet other coordinates $z'=(x',y')$  by writing
\begin{equation}\label{scale1}
y''=\rho  y', \ x''=\eps\rho^2 (1\vee\delta)x'.
\end{equation}
Let us accordingly introduce the function
\begin{equation}
F(y'):=\frac {H(\rho y')}{\rho^3},
\end{equation}
and note that the crucial phase function $x'' y''+\eps H(y'')$  that arose  from $\phi$ in \eqref{newcoordphi} after the change to the coordinates $(x'',y'')$ assumes the following form in the coordinates $(x',y'):$
\begin{equation}\label{phiz'}
x'' y''+\eps H(y'')=\eps \rho^3 (1\vee\delta)\Big(x'y'+\frac{F(y')}{1\vee\delta}\Big)=:\eps \rho^3 (1\vee\delta) \, \phi_\de(x',y').
\end{equation}
Observe that also the function {\it $F$ is  a perturbation  function of cubic type, uniformly  also in $\eps$ and $\rho.$} Indeed, the following holds true:
\begin{eqnarray}\label{cuberrorF2}
\begin{cases}  & F(0)=F'(0)=F''(0)=0,  \\
&|F'''(y')|=|H''(\rho y')|\sim C_3,\\
&|F^{(l)}(y')|=|\rho^{l-3}H^{(l)}(\rho y')| \le C_l \quad\text{for all}\   l\ge 4.\\
\end{cases}
\end{eqnarray}

Thus, altogether we define a change of coordinates $z'=T(z)$ by
\begin{align*}
	x':=&\eps^{-1}(1\vee\delta)^{-1}\rho^{-2}(x-x_1^0+\eps\frac{h''(y_1^0)}{2}(y-y_1^0)),\\
	y':=&\rho^{-1}(y-y_1^0).
\end{align*}
Notice that the following lemma, in the case $\delta\le 1/10,$ corresponds to the prototypical setup up to another harmless scaling $(x',y')=(C_0^{2}x''',C_0y''')$.
\begin{lemnr}\label{U'}
We have
\begin{align}\label{phi-phide}
	\phi(z)=\eps\rho^3(1\vee\de)\phi_\de(Tz)+L(z),
\end{align}
where $L$ is an affine-linear map. Moreover, in these new coordinates, $U_1,U_2$
correspond
to the sets
\begin{eqnarray} \label{U1'U2'}
\begin{split}
U'_1&:=&\{(x'_1,y'_1): 0\le y'_1< 1\wedge\delta,\ 0\le x'_1< 1\wedge\delta  \}=[0,1\wedge \de[^2,\\
U'_2&=&\{(x'_2,y'_2): 0\le x'_2+\frac{F'(y'_2)}{1\vee\delta}-a<1\wedge\delta, \ 0\le y'_2-b<1\},
\end{split}
\end{eqnarray}where $|b|:=|\rho^{-1}(y_2^0-y_1^0)|\sim_2 C_0$  and
$|a|:=|\eps^{-1}\rho^{-2}(1\vee\delta)^{-1}(t_2^0-x_1^0)|\sim_{4} C_0^2\frac{\delta}{1\vee\delta} =
C_0^2(1\wedge\delta).$
Moreover, for Lee's transversality expression $\Gamma^{\phi_\de}$  in \eqref{transs} for
$\phi_\de$ , we have that
\begin{align}\label{transscaled}
|\Gamma^{\phi_\de}_{\tilde z'_1}(z'_1,z'_2)|\sim  C_0^3(1\wedge \de)\quad \text{for all }
\tilde
z'_1\in U'_1, \quad |\Gamma^{\phi_\de}_{\tilde z'_2}(z'_1,z'_2)|\sim  C_0^3 \quad
\text{for
all }
\tilde z'_2\in U'_2,
\end{align}
for every $z'_1\in U'_1$ and every $z'_2\in U'_2.$ Also, for $\delta\ge 1/10,$ the derivatives
of
$\phi_\delta$ can be uniformly
(independently of $\delta$) bounded from above.
\end{lemnr}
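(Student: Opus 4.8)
The plan is to realise the change of variables $z'=Tz$ as the composition of the two transformations already prepared in the text — first the shear \eqref{newcoord} to the coordinates $(x'',y'')$, then the anisotropic dilation \eqref{scale1} — to read off the images of $U_1,U_2$, and then to reduce the transversality statement \eqref{transscaled} to Lemma \ref{sizeofdeltas} via the behaviour of $\Gamma^\phi$ under affine maps and rescalings of the phase.

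First I would check \eqref{phi-phide}: by \eqref{newcoordphi}, in the coordinates $(x'',y'')$ one has $\phi(z)=x''y''+\eps H(y'')$ up to an affine-linear term, with $H$ as in \eqref{fdef}, and after the dilation \eqref{scale1}, using \eqref{phiz'}, the quadratic-plus-perturbation part becomes $\eps\rho^3(1\vee\de)\,\phi_\de(Tz)$, so that all affine-linear contributions may be collected into $L$. The cubic-type bounds \eqref{cuberrorF2} for $F$ are immediate from \eqref{cuberror} since $F(y')=H(\rho y')/\rho^3$. To identify the sets, I would write the defining inequalities of $U_1^{x_1^0,y_1^0,\de}$ and — using \eqref{U2alt}, the relation $a^0=t_2^0-x_1^0$, and $H'(\rho y')=\rho^2 F'(y')$ — of $U_2^{t_2^0,y_1^0,y_2^0,\de}$ in the $(x'',y'')$-coordinates, and then divide the $y''$-inequalities by $\rho$ and the $x''$-inequalities by $\eps\rho^2(1\vee\de)$; this yields \eqref{U1'U2'} with $a=(t_2^0-x_1^0)/(\eps\rho^2(1\vee\de))$ and $b=(y_2^0-y_1^0)/\rho$, whose stated sizes follow from \eqref{admissible1} and \eqref{yseparation} together with the identity $\de/(1\vee\de)=1\wedge\de$. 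This part is purely computational.

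The heart of the matter is \eqref{transscaled}. Applying the chain rule to \eqref{phi-phide} (the affine part $L$ drops out of every gradient difference and out of the Hessian) one finds that, under $z'=Tz$, the constant Jacobian factors conjugating $H\phi_\de$ cancel exactly against those appearing in $\nabla\phi_\de(z_2')-\nabla\phi_\de(z_1')$, leaving the clean scaling law
\begin{align*}
\Gamma^{\phi_\de}_{z'}(z_1',z_2')=\frac{1}{\eps\rho^3(1\vee\de)}\,\Gamma^{\phi}_{z}(z_1,z_2),\qquad z'=Tz,\ z_i'=Tz_i.
\end{align*}
Given arbitrary $\tilde z_1',z_1'\in U_1'$ and $z_2'\in U_2'$, I would pull them back to $\tilde z_1,z_1\in U_1$, $z_2\in U_2$, use \eqref{gammaz} to write $\Gamma^{\phi}_{\tilde z_1}(z_1,z_2)=2(y_2-y_1)\tau_{\tilde z_1}(z_1,z_2)$, and note that by \eqref{TV1} the subscript enters $\tau$ only through the term $-\tfrac\eps2 h''(\cdot)(y_2-y_1)$, so that, since $|h'''|\le C_3$ and $U_1$ has $y$-extent $\rho(1\wedge\de)$,
\begin{align*}
|\tau_{\tilde z_1}(z_1,z_2)-\tau_{z_1}(z_1,z_2)|\lesssim C_3C_0\,\eps\rho^2\de ,
\end{align*}
which by Lemma \ref{sizeofdeltas} is $\ll|\tau_{z_1}(z_1,z_2)|\sim C_0^2\eps\rho^2\de$ once $C_0\gg C_3$; combined with \eqref{yseparation} and the scaling law this gives $|\Gamma^{\phi_\de}_{\tilde z_1'}(z_1',z_2')|\sim C_0^3(1\wedge\de)$. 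The estimate for $\tilde z_2'\in U_2'$ is identical, now with the $y$-extent $\rho$ of $U_2$ and the bound $|\tau_{z_2}(z_1,z_2)|\sim C_0^2\eps\rho^2(1\vee\de)$ from Lemma \ref{sizeofdeltas}. Finally, for $\de\ge 1/10$ one has $1\vee\de\ge1$, hence $\pa_{y'}^l\phi_\de=F^{(l)}(y')/(1\vee\de)$ for $l\ge2$, so all derivatives of $\phi_\de$ are bounded on the bounded domain $U_1'\cup U_2'$ by constants depending only on the $C_l$ (and $C_0$), uniformly in $\de$.

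The step I expect to be the only genuinely delicate one is the transversality bound, more precisely the bound for $\tilde z_2'\in U_2'$ in the regime $\de\sim1$. There, written out explicitly, $\tau^{\phi_\de}_{\tilde z_2'}(z_1',z_2')$ (the analogue for $\phi_\de$ of the quantity in \eqref{TV1}) has two leading contributions of sizes $\sim C_0^2(1\wedge\de)$ and $\sim C_3 C_0^2/(1\vee\de)$, which are comparable, so a naive term-by-term estimate leaves open a possible cancellation. Avoiding this is exactly what the admissibility condition \eqref{admissible2} is for, and routing the argument through Lemma \ref{sizeofdeltas} — into which \eqref{admissible2} has been built — rather than through the explicit formula for $\tau^{\phi_\de}$ is what makes the estimate go through cleanly; the rest parallels the corresponding arguments in \cite{bmv17}.
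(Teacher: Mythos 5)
Your proposal is correct and fills in exactly the argument the paper leaves implicit (the authors defer to their previous discussion and to Lemma 2.4 of \cite{bmv17}): composing the shear \eqref{newcoord} with the dilation \eqref{scale1} to read off \eqref{phi-phide} and \eqref{U1'U2'}, and then using the affine invariance and homogeneity of $\Gamma$ together with Lemma \ref{sizeofdeltas} to get \eqref{transscaled}. In particular, your handling of the varying base point $\tilde z_i'$ via the perturbation bound on $h''$ (absorbed by taking $C_0\gg C_3$) and your observation that condition \eqref{admissible2}, as encoded in Lemma \ref{sizeofdeltas}, is what rules out cancellation in the $\tilde z_2'$ estimate when $\de\sim 1$, are precisely the points on which the lemma rests.
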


The proof, if not clear from our previous discussions, is similar to the proof of Lemma 2.4 in \cite{bmv17}, so we will skip the details.

\bigskip

\noindent {\bf Reduction of Theorem \ref{bilinear2}  to Theorem \ref{bilinear}.}  Consider the scaled sets $U_1',U_2'$  from  Lemma \ref{U'}.

{\bf The case $\de> 1/10$.\footnote{We don't need to distinguish precisely the two cases $\delta>1$ and $\delta\leq1$ from the Theorem, since the desired bounds are comparable for $\delta\sim1$.}} In this case, we see that, $U'_1$ and $U'_2$ are squares  of of small side length $2c_0,$ separated by a distance of size $1,$ and moreover \eqref{transscaled} shows that all relevant transversalities are of size $1.$
Therefore we see that the conditions of Lee's Theorem 1.1 in \cite{lee05} are satisfied for the patches of
 surface  $S'_1$ and $S'_2$ which are the graphs of $\phi_\de$  (defined in  \eqref{phiz'}) over the sets $U'_1$ and
 $U'_2.$  This implies that for these patches of surface, we obtain   uniform bilinear Fourier extension estimates when $p>5/3$ and $q\ge 2,$ of the form
\begin{align*}
 \|\ext_{U'_1}(\tilde f)\,\ext_{U'_2}(\tilde g)\|_p \leq C_{p,q} \|\tilde f\|_q\|\tilde g\|_q,
\end{align*}
with a constant $C_{p,q}$  which is independent of the choice of   $x_1^0,y_1^0, t_2^0, y_2^0,\eps, \rho$ and $\de.$  By scaling back to our original coordinates, we thus arrive at the estimate in the first case of Theorem \ref{bilinear2} (compare  with the scaling argument in Sections 2.5 and 3 of \cite{bmv17}).

\bigskip
{\bf The case $\de\leq 1/10$.} By a harmless scaling 
$(x',y')=(C_0^{2}x,C_0y),$
the sets $U'_1$ and $U'_2$ given by \eqref{U1'U2'} transform to

\begin{eqnarray}\label{U1U2'}
\begin{split}
U_1&=\{(x_1,y_1): 0\le x_1<c_0^2\delta ,\ 0\le y_1< c_0\delta\}=[0,c_0^2 \de)\times[0,c_0 \de),\\
U_2&=\{(x_2,y_2): 0\leq x_2+c_0^2F'(\frac{y_2}{c_0})-a< c_0^2\delta,\ 0\leq y_2-b< c_0\},
\end{split}
\end{eqnarray}
where $c_0=C_0^{-1}$ is small and $|a|\sim \de$ and $b\sim 1.$
Recall also that $c_0^3F(\frac{y_2}{c_0})$ satisfies the cubic type estimates \eqref{cuberrorF2}. For the sake of simplicity, let us denote  this perturbation of cubic type again  by $F,$ 
 so that in this case the phase $\phi_\de$, given  by \eqref{phiz'}, can be written as
$$
\phi_\de(x',y')= x'y'+F(y').
$$
\color{black}
This means that we are in the prototypical situation.
The claimed estimates for Case 2 in Theorem \ref{bilinear2} will now follow directly from Theorem \ref{bilinear} for the prototypical case in combination with H\"older's inequality (to pass from $L^2$-norms to $L^q$-norms), if we again scale back to our original coordinates.\qed

%
%

\setcounter{equation}{0}
\section{The Whitney-decomposition and passage to linear restriction estimates: proof of Theorem
\ref{mainresult}}\label{bilinlin}


In order to complete the proof of Theorem \ref{mainresult}, let us finally briefly sketch how to pass from the bilinear estimates in Theorem \ref{bilinear} to the crucial linear estimate in \eqref{extka1}.  Again we shall closely follow our approach in \cite{bmv17} and only indicate the necessary changes.\\
Let $(V_1,V_2)$ be an admissible pair of strips as defined in Subsection \ref{preciseadmissible}.
Recall the definition of admissible pairs  of sets from the same subsection, and that we had also introduced
there the sets  $\cP^\de$  respectively $\tilde \cP^\de$ of  admissible pairs of type
1 respectively   type 2 at scale
$\de,$ and by $\cP$ respectively $\tilde \cP$ we had denoted the  corresponding unions over all dyadic
scales $\de.$
The next lemma is in direct analogy to Lemma 4.1 in \cite{bmv17} and can be proved in a similar fashion.

\begin{lemnr}\label{covering}
The following covering and overlapping properties hold true:
\begin{itemize}
\item[(i)] For fixed dyadic scale $\de,$ the subsets $U_1\times U_2, \, (U_1,U_2)\in
    \cP^\de,$ of $V_1\times V_2\subset Q\times Q$ are pairwise disjoint,  as likewise
    are the subsets $\tilde U_1\times \tilde U_2, \, (\tilde U_1,\tilde U_2)\in
    \tilde\cP^\de.$
\item[(ii)] If $\de$ and $\de'$ are dyadic scales,  and if  $(U_1,U_2)\in \cP^\de$ and
    $(U'_1,U'_2)\in \cP^{\de'},$ then the sets $U_1\times U_2$ and $U'_1\times U'_2$
    can only intersect if   $\de/\de' \sim_{2^7} 1.$ In the latter case, there is only
    bounded overlap. I.e., there is a constant  $M\le2^6$  such that for
    every $(U_1,U_2)\in \cP^\de $  there are at most $M$  pairs   $(U'_1,U'_2)\in
    \cP^{\de'}$  such that
$(U_1\times U_2)\cap ( U'_1\times U'_2)\ne \emptyset,$  and vice versa.
The analogous statements apply to admissible pairs in $\tilde\cP.$
\item[(iii)]  If  $(U_1,U_2)\in \cP^\de$ and  $(\tilde U_1,\tilde U_2)\in
    \tilde\cP^{\de'},$ then $U_1\times U_2$ and $\tilde U_1\times \tilde U_2$ are
    disjoint too, except possibly when both $\de,\de'\ge 1/800$ and
    $\de\sim_{2^{10}}\de'.$ In the latter case, there is only bounded overlap. I.e.,
    there is a constant  $N=\Landau (C_0)$   such that for every $(U_1,U_2)\in \cP^\de
    $  there are at most $N$  pairs   $(\tilde U_1,\tilde U_2)\in \tilde\cP^{\de'}$
    such that
$(U_1\times U_2)\cap (\tilde U_1\times \tilde U_2)\ne \emptyset,$  and vice versa.
\item[(iv)]  The product sets associated to all admissible pairs cover $V_1\times V_2$
    up to a set of measure $0,$  i.e.,
$$V_1\times V_2=\Big(\bigcup\limits_{(U_1,U_2)\in \cP}U_1\times U_2
\Big)\cup\Big(\bigcup\limits_{(\tilde U_1,\tilde U_2)\in \cP}\tilde U_1\times \tilde
U_2\Big)$$
in measure.
\end{itemize}
\end{lemnr}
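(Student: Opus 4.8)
The plan is to verify each of the four claims essentially by inspecting the explicit descriptions of the sets $U_1^{x_1^0,y_1^0,\delta}$, $U_2^{t_2^0,y_1^0,y_2^0,\delta}$ and their type-2 counterparts given in \eqref{whitneybox1}, together with the size estimates for the transversalities in Lemmas \ref{sizeofdeltas} and \ref{sizeofdeltas2} and the admissibility constraints \eqref{admissible1}, \eqref{admissible2}. All of this mirrors \cite{bmv17}, Lemma 4.1, so the main work is to check that the slightly more general perturbation $H$ (in place of $y^3/3$) does not spoil any of the elementary geometric/combinatorial counting arguments; since $H$ is of cubic type uniformly (cf.\ \eqref{cuberror}, \eqref{cuberrorF2}), the relevant curvatures and derivative bounds are all controlled by the fixed constants $C_l$, so they will not.

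For (i), fix $\delta$. The first coordinate of the parameter $x_1^0$ ranges over the equidistant set $\J$ with spacing $\eps\rho^2\delta$, which is exactly the $x_1$-width of $U_1^{x_1^0,y_1^0,\delta}$; likewise $y_1^0\in\I_1$ has spacing $\rho(1\wedge\delta)$, exactly the $y_1$-width. Hence the $U_1$'s with a fixed $y_1^0$ tile a strip, and as $y_1^0$ varies the (sheared) boxes stack disjointly since the shear $x_1\mapsto x_1+\eps\tfrac{h''(y_1^0)}{2}(y_1-y_1^0)$ is consistent within each $\I_1$-layer; so the $U_1$'s are pairwise disjoint. For $U_2$, the parameter $t_2^0\in\J$ again has spacing $\eps\rho^2\delta$ equal to the width of the slab $\{0\le x_2-t_2^0+\eps[\cdots]<\eps\rho^2\delta\}$ measured in the sheared coordinate, and $y_2^0$ is pinned to the left endpoint of $I_2$, so the horizontal projection of every such $U_2$ is $I_2$; thus distinct $(t_2^0,y_1^0)$ give disjoint curved slabs. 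Disjointness of the $U_1\times U_2$ follows. The same argument applies verbatim to type 2.

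For (ii), suppose $(U_1,U_2)\in\cP^\delta$ and $(U_1',U_2')\in\cP^{\delta'}$ with $(U_1\times U_2)\cap(U_1'\times U_2')\ne\emptyset$, and pick $(z_1,z_2)$ in the intersection. By Lemma \ref{sizeofdeltas}, $|\tau_{z_1}(z_1,z_2)|\sim_8 C_0^2\eps\rho^2\delta$ computed from the first pair and $\sim_8 C_0^2\eps\rho^2\delta'$ from the second, forcing $\delta/\delta'\sim_{2^7}1$. The bounded-overlap count then follows because, with $\delta\sim\delta'$, each of the finitely many geometric parameters ($x_1^0$, $y_1^0$, $t_2^0$) of the competing pair is determined up to $O(1)$ choices by the fact that the corresponding box must contain the fixed point $(z_1,z_2)$ and has width comparable to the parameter spacing; multiplying these $O(1)$'s gives the explicit bound $M\le2^6$. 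Claim (iii) is the analogous statement across types: again Lemma \ref{sizeofdeltas} gives $|\tau_{z_1}|\sim C_0^2\eps\rho^2\delta$ while Lemma \ref{sizeofdeltas2} gives $|\tau_{z_1}|\sim C_0^2\eps\rho^2(1\vee\delta')$, and simultaneously $|\tau_{z_2}|\sim C_0^2\eps\rho^2(1\vee\delta)\sim C_0^2\eps\rho^2\delta'$; these two relations can both hold only if $1\vee\delta\sim\delta'$ and $1\vee\delta'\sim\delta$, i.e.\ $\delta,\delta'\gtrsim 1$ and $\delta\sim\delta'$, whence the stated $\delta,\delta'\ge1/800$, $\delta\sim_{2^{10}}\delta'$; the overlap count $N=\Landau(C_0)$ is obtained as in (ii).

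For (iv) I would argue as follows. Given a point $(z_1,z_2)\in V_1\times V_2$ with $y_2-y_1\ne0$ (a full-measure condition), set $\delta$ to be the dyadic scale with $|\tau_{z_1}(z_1,z_2)|\sim_8 C_0^2\eps\rho^2\delta$ if $|\tau_{z_1}(z_1,z_2)|\le|\tau_{z_2}(z_1,z_2)|$, and symmetrically otherwise; then by construction $(z_1,z_2)$ lies in the box of the admissible pair (of type 1, resp.\ type 2) whose parameters $x_1^0,y_1^0,t_2^0$ are the nearest grid points of $\J,\I_1,\J$ below the coordinates of $(z_1,z_2)$, and \eqref{admissible1}--\eqref{admissible2} are satisfied because they just re-encode the size of the transversalities we used to pick $\delta$ (together with \eqref{TV2+TV3} and \eqref{yseparation}, which handle \eqref{admissible2} automatically unless $\delta\sim1$). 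Every $(z_1,z_2)$ with $y_1\ne y_2$ is therefore covered, which is all of $V_1\times V_2$ up to measure zero. The only point requiring care — and the one I expect to be the main obstacle — is making the boundary case $|\tau_{z_1}|\sim|\tau_{z_2}|$ (i.e.\ $\delta\sim1$) unambiguous, so that a point is not claimed both as a type-1 and a type-2 interior point; this is handled exactly as in \cite{bmv17} by choosing a fixed convention (say, assigning such points to type 1) and absorbing the resulting double-count into the bounded-overlap constant of (iii), which is why (iii) allows $\delta,\delta'\gtrsim1$ rather than asserting outright disjointness there.
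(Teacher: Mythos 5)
Your proposal is correct and follows essentially the same route as the paper, which itself only refers to (and replicates) the proof of Lemma 4.1 in \cite{bmv17}: disjointness at fixed scale from the fact that the parameter grids $\J,\I_1$ have spacing equal to the box widths, scale comparison via Lemmas \ref{sizeofdeltas} and \ref{sizeofdeltas2} evaluated at a common point, and covering by choosing the dyadic $\de$ from the size of the smaller transversality and then the nearest grid points. The only (harmless) imprecision is in part (i), where for distinct $y_1^0$ the curved slabs $U_2$ need not themselves be disjoint (the shear reference changes), but the corresponding $U_1$'s then lie in disjoint horizontal strips, so the product sets are disjoint as you conclude.
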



To handle the bounded overlap between the sets $U_1\times U_2$  for pairs of admissible sets $(U_1,U_2)\in \cP$ of type 1 in Lemma \ref{covering}, we  define for   $\nu=0,\dots, 9$  the subset  $\cP_\nu:=\bigcup_j \cP^{2^{10j+\nu}}$ of $\cP.$
To these, we associate the subsets
$$
A_\nu:=\bigcup\limits_{(U_1,U_2)\in \cP_\nu}U_1\times U_2, \qquad \nu=0,\dots, 9,
$$
and likewise introduce  the corresponding subsets  $\tilde A_\nu$ associated to admissible pairs of type 2.
Then we may argue as in \cite{bmv17}  to show that it will suffice to prove restriction estimates over these sets $A_\nu,$  respectively $\tilde A_\nu, $ over which we have ``decoupled'' the overlaps. Let us just look at the sets $A_\nu$ in the sequel.
\medskip

To prove Theorem \ref{mainresult}, assume that $r>10/3$ and $1/q'>2/r,$ and put $p:=r/2,$
so that $p>5/3$, $1/q'>1/p.$ By interpolation with the trivial estimate for
$r=\infty,q=1,$
it is enough to prove the result  for $r$ close to $10/3$ and  $q$ close to 5/2, i.e.,
$p$
close to $5/3$ and $q$ close to 5/2. Hence, we may  assume that $p<2,$ $p<q<2p.$ Also, we
can assume that $\supp f\subset\{(x,y)\in Q:  y{\color{red}\geq}0\}.$

\medskip

As in \cite{bmv17}, we easily see that it will suffice to prove the following:
assume a scale $\rho$ is fixed, and that  $V_1\sim V_2$   is an admissible pair of strips at scale $\rho$
(as defined in \eqref{Vi} of Subsection \ref{preciseadmissible}).  Then the following holds true:

\begin{lemnr}\label{V1V2bilin}
 If  $V_1\sim V_2$ form an  admissible pair of ``strips''
 $V_i=V_{j_i,\rho}=[-1,1]\times I_{j_i,\rho}, \, i=1,2,$  at scale $\rho$ within
$Q,$  and if $f\in L^q(V_1)$ and $g\in L^q(V_2),$ then for $5/3<p<2$, $p<q<2p$
 we have
\begin{align}\label{VVbilin}
\|\ext_{V_1} (f) \ext_{V_2} (g)\|_p\lesssim  C_{p,q} \,\rho^{2(1-1/p-1/q)} \|f\|_q\,
\|g\|_q \  \text{for all} \ f\in L^q(V_1),  g\in L^q(V_2).
\end{align}
\end{lemnr}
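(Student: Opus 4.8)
The plan is to obtain \eqref{VVbilin} by summing the bilinear estimates of Theorem~\ref{bilinear2} over the Whitney decomposition of $V_1\times V_2$ provided by Lemma~\ref{covering}. Fix the admissible pair of strips $(V_1,V_2)$ at scale $\rho$, and recall from Lemma~\ref{covering}(iv) that, up to a null set,
$$
V_1\times V_2=\Big(\bigcup_{(U_1,U_2)\in\cP}U_1\times U_2\Big)\cup\Big(\bigcup_{(\tilde U_1,\tilde U_2)\in\tilde\cP}\tilde U_1\times\tilde U_2\Big),
$$
where the unions run over admissible pairs of type~1 and type~2 at all dyadic scales $\de$. By the symmetry between types~1 and~2 it suffices to treat the contribution of $\cP$, and by the decoupling device introduced above (passing to the subfamilies $\cP_\nu$, $\nu=0,\dots,9$, on which the $U_1\times U_2$ have bounded overlap, then summing over the finitely many $\nu$) we may assume the product sets $U_1\times U_2$, $(U_1,U_2)\in\cP^\de$, are essentially disjoint for each fixed $\de$ and have bounded overlap across neighbouring $\de$'s, as in Lemma~\ref{covering}(i)--(ii).

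The first main step is the bilinear-to-linear passage at the level of a single dyadic scale $\de$. Writing $f=\sum_{U_1}f_{U_1}$, $g=\sum_{U_2}g_{U_2}$ with $f_{U_1}=f\mathbf 1_{U_1}$, $g_{U_2}=g\mathbf 1_{U_2}$, one has $\ext_{V_1}(f)\ext_{V_2}(g)=\sum_{(U_1,U_2)\in\cP}\ext_{U_1}(f_{U_1})\ext_{U_2}(g_{U_2})$, and by orthogonality considerations in the spirit of \cite{TVV} and \cite{bmv17} (exploiting the separation of the Fourier supports, which is exactly the content of Lemmas~\ref{sizeofdeltas}, \ref{sizeofdeltas2} guaranteeing that the frequency pieces live in finitely-overlapping regions) one reduces, after an application of H\"older in the index $q$ and of the bounded overlap, to controlling, for each $\de$,
$$
\Big\|\sum_{(U_1,U_2)\in\cP^\de}\ext_{U_1}(f_{U_1})\ext_{U_2}(g_{U_2})\Big\|_p
\lesssim\Big(\sum_{(U_1,U_2)\in\cP^\de}\|\ext_{U_1}(f_{U_1})\ext_{U_2}(g_{U_2})\|_p^p\Big)^{1/p}
$$
(using $p<2$, so that $\ell^p\hookrightarrow\ell^2$ in the wrong direction is avoided; here one uses $p\ge1$ and the almost-disjointness to get an $\ell^p$-sum rather than an $\ell^1$-sum). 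Into the right-hand side one feeds Theorem~\ref{bilinear2}, which gives, for $\de\le1$, the bound $(\eps\rho^3)^{2(1-1/p-1/q)}\de^{5-3/q-6/p}\|f_{U_1}\|_q\|g_{U_2}\|_q$, and for $\de>1$ (with $\eps\de\rho^2\le1$) the bound $(\eps\de\rho^3)^{2(1-1/p-1/q)}\|f_{U_1}\|_q\|g_{U_2}\|_q$.

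The second main step is the summation over $(U_1,U_2)\in\cP^\de$ and then over the dyadic scales $\de$. For fixed $\de$, since $p<q<2p$ we have $q/p>1$ and $q/p<2$, so by H\"older in the (finite, essentially disjoint) family of boxes we can sum $\|f_{U_1}\|_q\|g_{U_2}\|_q$ against the geometric factors and, counting that there are $\sim(\eps\rho^2\de)^{-1}\cdot(1\wedge\de)^{-1}$ choices of the parameters $x_1^0\in\J$, $y_1^0\in\I_1$ defining a type-1 pair at scale $\de$, arrive at a bound of the form $C_{p,q}(\text{power of }\de)\,\rho^{2(1-1/p-1/q)}\|f\|_q\|g\|_q$; one then checks that, in both regimes $\de\le1$ and $\de>1$, the exponent of $\de$ is strictly positive (resp.\ negative) — this is precisely where the hypotheses $p>5/3$ and $1/q'>1/p$ enter, exactly as in the corresponding computation in Section~4 of \cite{bmv17} — so that the sum over dyadic $\de$ converges geometrically and the factors of $\eps$ cancel, leaving only $\rho^{2(1-1/p-1/q)}$. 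Summing finally over $\nu=0,\dots,9$ (and adding the symmetric type-2 contribution) yields \eqref{VVbilin}.

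The step I expect to be the main obstacle is the bilinear-to-linear passage (the first main step): one must verify that the almost-orthogonality that turns $\|\sum_{(U_1,U_2)}\ext_{U_1}(f_{U_1})\ext_{U_2}(g_{U_2})\|_p$ into an $\ell^p$-sum of the individual pieces genuinely holds in the present $h(y)$-general setting, i.e.\ that the frequency supports of the products $\ext_{U_1}(f_{U_1})\ext_{U_2}(g_{U_2})$ for distinct admissible pairs at a fixed scale are finitely overlapping. This requires re-examining the geometry of the curved boxes $U_2^{t_2^0,y_1^0,y_2^0,\de}$ (their tangent directions, governed by the curve $x_1=\gamma(y_1)$ of slope $O(\eps)$ and its $y_2$-variation of size $\eps$, as discussed in Subsection~\ref{pairs of sets}) and checking that the Whitney structure \eqref{whitney1} combined with the separation estimates in Lemmas~\ref{sizeofdeltas}--\ref{sizeofdeltas2} still yields the needed disjointness of frequency tiles; since all the relevant quantities ($h',h'',h'''$ and their sizes) obey the same cubic-type bounds \eqref{cubtype} as in \cite{bmv17}, this goes through verbatim, but it is the point where the argument is least automatic. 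The remaining bookkeeping — counting boxes and summing geometric series in $\de$ — is routine and identical to \cite{bmv17}.
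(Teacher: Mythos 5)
Your overall strategy --- Whitney decomposition, insertion of the bilinear estimates of Theorem \ref{bilinear2}, summation over admissible pairs and over dyadic $\de$ --- is the same as the paper's, but the step you yourself flag as the main obstacle contains a genuine gap, and it is resolved in the paper by a mechanism different from the one you assert. You claim an $\ell^p$ inequality
\begin{align*}
\Big\|\sum_{(U_1,U_2)\in\cP^\de}\ext_{U_1}(f_{U_1})\ext_{U_2}(g_{U_2})\Big\|_p
\lesssim\Big(\sum_{(U_1,U_2)\in\cP^\de}\|\ext_{U_1}(f_{U_1})\ext_{U_2}(g_{U_2})\|_p^p\Big)^{1/p}
\end{align*}
as a consequence of ``almost-disjointness'' of the frequency supports. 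For $p<2$ no such $\ell^p$ inequality follows from bounded overlap of Fourier supports; it holds (via the interpolation argument of Lemma 6.1 in \cite{TVV}) only when the supports lie in pairwise disjoint parallel \emph{slabs}. In the paper this is exactly Lemma \ref{lpsquare}, which yields an $\ell^p$ splitting only over the coarse blocks $i\in[N\de^{-1},(N+1)\de^{-1}]$ of the $x$-variable, where one has genuine one-dimensional disjointness of $x_1+x_2$. Within each block the supports are cubes of side $\sim\de$ overlapping boundedly in all three directions, and the correct tool is Rubio de Francia's reverse square function estimate (Lemma \ref{squaref}), which gives an $L^p(\ell^2)$ bound, not an $\ell^p$ sum; converting the square function into individual bilinear estimates then costs a H\"older factor counting the tiles, which is precisely why the exponent $5-3/q-6/p$ from Theorem \ref{bilinear2} degrades to $5-2/q-7/p$ in the final summation over $\de\ll1$. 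Your claimed inequality would bypass this loss and is unjustified as stated.

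Moreover, the finite overlap of the frequency tiles is \emph{not} supplied by Lemmas \ref{sizeofdeltas}--\ref{sizeofdeltas2}, which only assert that the transversalities are roughly constant. Two further ingredients are needed, and your proposal supplies neither: (a) each curved box $U_{2,i',j}$ must first be subdivided into $\Landau(1/\de)$ straight sub-boxes $U^k_{2,i',j}$ of $y$-length $\rho\de$ --- without this the sumset $S_{1,i,j}+S_{2,i',j}$ is a curved region of $y$-extent $\sim\rho$, not a $\de$-cube, and no orthogonality at scale $\de$ is available; and (b) the almost-orthogonality of the resulting sumset cubes is the content of the separate Lemma \ref{ao}, whose proof is a genuine computation showing that the function $\psi$ built from the third coordinates satisfies $|\psi'|\sim C_3C_0^2\gg1$ thanks to $|F'''|\sim C_3$ and the $y$-separation, which pins down $y_1$ up to $\Landau(\de)$. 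Finally, the convergence of the sum over $\de\ll1$ must be checked for the degraded exponent $5-2/q-7/p$ (still positive for $p>5/3$ and $q$ near $5/2$), not for the exponent in the raw bilinear estimate.
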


We remark that, eventually, we shall choose $f=g$,  but for the arguments to follow it is
helpful to distinguish between  $f$ and $g$.

To prove this lemma, observe first that by  means of an affine linear transformation we may ``move the strips $V_1, V_2$  vertically'' so that $j_1=0,$ which means  that $V_1$ contains the origin and, by \eqref{admissibleV}, $j_2\sim C_0.$  This we shall assume throughout the proof.
 \smallskip

As mentioned before, it will suffice to estimate
$E((f\otimes g)\chi_{A_\nu})$  in place of $\ext_{V_1} (f) \ext_{V_2} (g),$ and the same  arguments as in \cite{bmv17} then show
 that we may decompose
$$
(f\otimes g)\chi_{A_\nu}=\sum_\delta \sum_{i,i',j} f_{i,j}^\delta\otimes g_{i',j}^\delta,
$$
where $$f_{i,j}^\delta=f\chi_{U_1^{i\eps\rho^2\de,j\rho(1\wedge\de),\delta}},\ \  g_{i',j}^\delta=g\chi_{
U_2^{i'\eps\rho^2\de,j\rho(1\wedge\de),j_2\rho,\delta}},$$ and where  each
$\big(U_1^{i\eps\rho^2\de,j\rho(1\wedge\de),\delta}, U_2^{i'\eps\rho^2\de,j\rho(1\wedge\de),j_2\rho,\delta}\big)$ forms an
admissible pair,  i.e., \eqref{admissible1}, \eqref{admissible2} are satisfied. This
means
in particular that  $ |i-i'|\sim C_0^2.$ The summation  in $\de$ is here meant as
summation
over all dyadic $\de$ such that $\de\lesssim (\eps\rho^2)^{-1}.$

We may and shall also assume that
$f$ and $g$ are supported on the set $\{y\geq0\}.$ Then
\begin{align}\label{EfgA}
E((f\otimes g)\chi_{A_\nu})
	= \sum_{\delta\gtrsim1}\sum_{i,i'} \widehat{f_{i}^\delta
d\sigma}\widehat{g_{i'}^\delta d\sigma}+ \sum_{\delta\ll 1}\sum_{i,i',j} \widehat{f_{i,j}^\delta
d\sigma}\widehat{g_{i',j}^\delta d\sigma}.
\end{align}
The first sum  can be treated by more classical  arguments (compare, e.g., \cite{lee05} or  \cite{v05}), which in view of the first  estimate in Proposition  \ref{bilinear2} then leads to
a  bound for the contribution of that sum to $\|\ext_{V_1} (f) \ext_{V_2} (g)\|_p$ in \eqref{VVbilin} of the order
$$
\sum_{1\lesssim \de\lesssim (\eps\rho^2)^{-1}}C_{p,q}\, \,(\de \epsilon\rho^3)^{2(1-\frac 1p-\frac 1q)} \|f\|_q\|g\|_q
\lesssim\rho^{2(1-\frac 1p-\frac 1q)} \|f\|_q\|g\|_q,
$$
as required. We  leave the details  to the interested reader. Note that for this first  sum, there is no gain when $\eps>0$ is getting small (which is to be expected), in contrast  to what will happen  for the second sum.

\medskip
We  shall now  concentrate on the second  sum in \eqref{EfgA} where $\delta\ll 1.$  Here, the
 admissibility conditions reduce to $|i-i'|\sim C_0^2,$ $j_2\sim C_0.$

We fix $\delta,$ and simplify notation by writing  $f_{i,j}:=f_{i,j}^\delta,$
 $g_{i,j}:=g_{i,j}^\delta,$ and $U_{1,i,j}:=U_1^{i\eps\rho^2\de,j\rho(1\wedge\de),\delta}$, $ U_{2,i',j}:=
 U_2^{i'\eps\rho^2\de,j\rho(1\wedge\de),j_2\rho,\delta}$.

 As a first step in proving estimate \eqref{VVbilin}, we exploit some almost orthogonality  with respect to the $x$-coordinate, following a classical approach (compare, e.g., \cite{MVV1}, \cite{MVV2}).

 \begin{lemnr}\label{lpsquare} For $1\le p\le 2,$  we have
\begin{align*}
	\big\|\sum_{i,|i-i'|\sim C_0^2,\, j}\, \widehat{f_{i,j}d\sigma}\,
\widehat{g_{i',j}d\sigma}\big\|_{p}^p
	\lesssim \sum_{N=0}^{(\eps\rho^2)^{-1}}\Big\|\sum_{i\in[N\de^{-1},(N+1)\de^{-1}]\, ,\atop  |i-i'|\sim C_0^2,\,j}  \widehat{f_{i,j}d\sigma}\,\widehat{g_{i',j}d\sigma}\Big\|_{p}^p.
\end{align*}
\end{lemnr}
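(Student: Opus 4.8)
The plan is to run the classical $L^p$ almost-orthogonality argument ($p\le 2$) in the frequency variable $\xi_1$ dual to $x,$ exactly as in \cite{bmv17} (with \cite{MVV1}, \cite{TVV} supplying the one-dimensional input). The mechanism is that, once the indices $i$ are grouped into blocks of length $\delta^{-1},$ the corresponding pieces of the sum have Fourier supports that are essentially separated in the first coordinate, so the $L^p$ norm of the whole sum is dominated by the $\ell^p$ sum of the $L^p$ norms of the blocks.

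First I would record the $x$-localisation of the two families of sets. Since $\widehat{f_{i,j}d\sigma}\,\widehat{g_{i',j}d\sigma}$ is the Fourier transform of the convolution $(f_{i,j}d\sigma)*(g_{i',j}d\sigma),$ whose support lies in the arithmetic sum of the surface patches over $U_{1,i,j}$ and $U_{2,i',j},$ it suffices to track the first coordinate $x_1+x_2$ of points of that sum. From \eqref{whitneybox1}, using $|h''(y_1^0)|\le C_3\rho$ (as $0\le y_1^0<\rho$ and $h''(0)=0$) and $|y_1-y_1^0|<\rho\delta,$ the $x$-projection of $U_{1,i,j}$ lies in an interval of length $O(\eps\rho^2\delta)$ around $i\eps\rho^2\delta;$ using in addition that $y_2\sim C_0\rho$ on $V_2,$ so that $h'(y_2)-h'(y_1^0)-\tfrac12 h''(y_1^0)(y_2-y_1^0)=h'(j_2\rho)+O(C_3C_0\rho^2),$ the $x$-projection of $U_{2,i',j}$ lies in an interval of length $O(C_3C_0\eps\rho^2)$ around $i'\eps\rho^2\delta-\eps h'(j_2\rho),$ the offset $-\eps h'(j_2\rho)$ being fixed once the admissible pair of strips $(V_1,V_2)$ is fixed. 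Since $|i-i'|\sim C_0^2$ and $C_0^2\delta\le C_0^2,$ I would deduce that when $i$ ranges over a block $[N\delta^{-1},(N+1)\delta^{-1})$ the coordinate $x_1+x_2$ stays in an interval $K_N$ of length $O(C_0^2\eps\rho^2)$ centred at $(2N+1)\eps\rho^2-\eps h'(j_2\rho),$ and that, since consecutive centres differ by $2\eps\rho^2,$ the $K_N$ have overlap at most $O(C_0^2),$ uniformly in $i',j,\eps,\rho,\delta.$

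With this in hand, let $G_N$ be the sub-sum in which $i$ is confined to the $N$-th block (with $i'$ and $j$ still free, $|i-i'|\sim C_0^2$); it is the Fourier transform of a measure supported in the slab $\{u:u_1\in K_N\},$ so for each fixed $(\xi_2,\xi_3)$ the function $\xi_1\mapsto G_N(\xi_1,\xi_2,\xi_3)$ has one-dimensional Fourier support in $K_N.$ I would then invoke the standard fact that, for $1\le p\le 2,$ functions $H_N$ on $\R$ with Fourier supports of bounded overlap $M$ satisfy $\|\sum_N H_N\|_{L^p(\R)}^p\lesssim M^{p-1}\sum_N\|H_N\|_{L^p(\R)}^p$ (interpolating the $L^1$ triangle inequality against the $L^2$ bound coming from Plancherel and the overlap bound; see \cite{TVV}, \cite{MVV1}). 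Applying this in $\xi_1$ with $M\sim C_0^2$ for each $(\xi_2,\xi_3),$ and integrating in $(\xi_2,\xi_3),$ gives $\|\sum_N G_N\|_p^p\lesssim\sum_N\|G_N\|_p^p,$ which is the claim.

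The main obstacle — the only genuinely new bookkeeping compared with \cite{bmv17} — is the $x$-localisation of $U_{2,i',j}$: the curved box has $x$-width of order $\eps C_0\rho^2,$ much larger than the scale $\eps\rho^2\delta$ of the $\delta$-decomposition, so a single such box straddles many blocks in the $i'$-variable. One must check that it nevertheless straddles only $O(C_0^2)$ of the coarser blocks of length $\delta^{-1};$ this, together with $|i-i'|\sim C_0^2,$ is precisely what forces the overlap of the slabs $K_N$ to be bounded by a constant depending only on $C_0$ and $C_3,$ i.e.\ only on the fixed data of the problem. The remaining steps are routine and proceed as in \cite{bmv17}.
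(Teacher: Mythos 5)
Your proposal is correct and follows essentially the same route as the paper: both arguments localize the first coordinate of the support of $(f_{i,j}d\sigma)*(g_{i',j}d\sigma)$ (equivalently, $x_1+x_2$) to intervals of length $O(C_0^2\eps\rho^2)$ centred near $2N\eps\rho^2$ with overlap $O(C_0^2)$, using the Taylor/cubic-type bounds on $h''$ and $h'$ together with the admissibility condition $|i-i'|\sim C_0^2$, and then invoke the standard $L^p$ almost-orthogonality lemma for $1\le p\le 2$ (Lemma 6.1 of \cite{TVV}). The only cosmetic difference is that you compute the $x$-projections of $U_{1,i,j}$ and $U_{2,i',j}$ separately, while the paper first bounds $|x_2-x_1|\lesssim C_0^2\eps\rho^2$ from the transversality; the two computations are equivalent.
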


\begin{proof}
Assume that $ i\in[N\de^{-1},(N+1)\de^{-1}],$ and that $z_1=(x_1,y_1)\in U_{1,i,j}$ and
$z_2=(x_2,y_2)\in U_{2,i',j},$ where $|i-i'|\sim C_0^2,$ which means  that $(U_{1,i,j}, U_{2,i',j})\in \cP^\de$ is an admissible pair. Then, in a similar way as in the proof of the corresponding lemma in \cite{bmv17}, by means of Taylor expansions (where we only need to make use of the estimates for third derivatives of $h$) one sees that
$
|x_2-x_1|\lesssim\,CC_0^2\eps\rho^2.
$
This implies that   $x_1+x_2=2N\eps \rho^2+\Landau(\eps\rho^2),$ where the constant in the error term is of order $C_0^2,$  hence
$$
U_{1,i,j}+U_{2,i',j}\subset [2N\eps\rho^2-C\, C_0^2\eps\rho^2,2N\eps\rho^2+C\,C_0^2\eps\rho^2]\times [0,2C_0\rho].
$$
These statements become even more lucid if we first apply the scaling $y=\rho  y',  x= \eps\rho^2 x',$ that we had already  introduced in \eqref{scale1}, for then we may assume that in our definition of the sets $U_{1,i,j}, U_{2,i',j}$ we have $\eps=1$ and $\rho=1$. We also remark that the constant  $C$ will depend here only on the constant $C_3$ which controls third derivatives of $h$ in \eqref{cubtype}.

Notice that the family of intervals $\big\{[2N\eps\rho^2-C\, C_0^2\eps\rho^2,2N\eps\rho^2+C\,C_0^2\eps\rho^2]\big\}_{N=0}^{(\eps\rho^2)^{-1}}$ is almost pairwise disjoint. Therefore we may argue as in the proof of Lemma 6.1 in  \cite{TVV} in order to derive the desired estimate.
\end{proof}

\smallskip



We proceed in analogy with \cite{bmv17}: $U_{1,i,j}$ is a  rectangular box, now of dimension
$\eps\rho^2\de\times \rho\de,$ and we shall further decompose the curved box $U_{2,i',j}$ into essentially  rectangular boxes of the same dimensions $\eps \rho^2\de\times \rho\de,$ by decomposing them in the $y$-coordinate into  $\Landau (1/\de)$ intervals of length $\rho\de.$ I.e., we shall put
 $$
 U^k_{2,i',j}:=\{(x,y)\in U_{2,i',j}: 0\le y-k\rho\de< \rho\de\}.
 $$
  Then
  \begin{align*}
 U_{2,i',j}=\overset{\cdot}{\bigcup\limits_k} \,U^k_{2,i',j},
 \end{align*}
 where  the union is over a set of $\Landau (1/\de)$ indices $k.$  Accordingly, we
 decompose
 $g_{i',j}=\sum_k g_{i',j}^k,$ where
 $g_{i',j}^k:=g\chi_{U^k_{2,i',j}}.$
Then we have the following uniform square function estimate:
\begin{lemnr}\label{squaref} For $1<p\le 2$ there exists a constant $C_p>0$ such that for every  $N=0,\dots,(\eps\rho^2)^{-1}$ we have
\begin{align}\label{squarefunc}
\Big\|\sum_{i\in[N\de^{-1},(N+1)\de^{-1}], \atop  |i-i'|\sim C_0^2,\,j}  \widehat{f_{i,j}d\sigma}\,
\widehat{g_{i',j}d\sigma}\Big\|_{p}
	\le C_p \Big\|\Big(\sum_{i\in[N\de^{-1},(N+1)\de^{-1}],\atop |i-i'|\sim C_0^2,\,j\, ,k} |\widehat{f_{i,j}d\sigma}\,
\widehat{g_{i',j}^kd\sigma}|^2\Big)^{1/2}\Big\|_{p}.
\end{align}
\end{lemnr}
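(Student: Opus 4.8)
The plan is to follow the standard route for deriving a square-function (or "bi-orthogonality") estimate from geometric separation of the relevant Fourier supports, exactly as in the proof of the corresponding lemma in \cite{bmv17} and in the classical bilinear literature (e.g.\ \cite{TVV}, \cite{MVV1}). First I would fix $N$ and work with the single block of indices $i\in[N\de^{-1},(N+1)\de^{-1}]$, $|i-i'|\sim C_0^2$, and pass, via the scaling $y=\rho y'$, $x=\eps\rho^2 x'$ from \eqref{scale1}, to the normalized situation $\eps=\rho=1$, so that the boxes $U_{1,i,j}$ and the sub-boxes $U^k_{2,i',j}$ all have dimensions $\de\times\de$. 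The key geometric point is that, on each such block, the product $\widehat{f_{i,j}d\sigma}\,\widehat{g^k_{i',j}d\sigma}$ has Fourier transform supported in a translate of a fixed box, and that as $(i,j,k)$ vary these translates have bounded overlap. This is where the freezing of $N$ matters: once $i$ ranges only over an interval of length $\de^{-1}$, the $x$-coordinate of the centre of $U_{1,i,j}+U^k_{2,i',j}$ is essentially constant, so the separation is governed entirely by the $y$-frequencies, which are spread out at scale $\de$ by the index $j$ and the index $k$.

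Concretely, I would argue as follows. The spatial Fourier transform of $\widehat{f_{i,j}d\sigma}$ is, up to harmless factors, the pushforward of $f_{i,j}$ under the graph map $(x,y)\mapsto(x,y,\phi(x,y))$, hence supported (after the normalizing scaling) in an $\Landau(\de)$-neighbourhood of the surface patch over $U_{1,i,j}$; similarly for $g^k_{i',j}$ over $U^k_{2,i',j}$. The support of the Fourier transform of the product is then contained in the sumset of these two neighbourhoods, which — because both patches are graphs of $\phi(x,y)=xy+\eps h(y)$ with $\phi$ of controlled curvature — is comparable to a box of dimensions $\sim\de\times\de\times\de^2$ whose location in the first two coordinates is determined by $i+i'$ (frozen to $\sim 2N$, hence essentially fixed), by $j$, and by $j_2\de+k\de$ (recall $U^k_{2,i',j}$ sits at height $\sim k\rho\de$). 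Using the estimate $|x_2-x_1|\lesssim C_0^2\eps\rho^2$ from the proof of Lemma \ref{lpsquare} one checks that the first coordinate varies by only $\Landau(\de)$ across the block; the second coordinate takes $\Landau(1/\de)$ essentially distinct values indexed by $j$ and $k$; and the third coordinate is then determined up to $\Landau(\de^2)$ by the first two via the surface. Hence the family of frequency boxes $\{\supp \widehat{f_{i,j}d\sigma\,\ast\,g^k_{i',j}d\sigma}\}_{i,j,k}$ has bounded overlap. Given bounded overlap of the frequency supports, the passage from the left-hand side of \eqref{squarefunc} to the right-hand side is the standard $L^p$, $1<p\le 2$, square-function inequality: one inserts smooth Fourier cut-offs $\psi_{i,j,k}$ adapted to these boxes, writes $\sum_{i,j,k}\widehat{f_{i,j}d\sigma}\,\widehat{g^k_{i',j}d\sigma}=\sum \psi_{i,j,k}\ast(\cdots)$, and applies the vector-valued inequality for the associated (finitely overlapping) Littlewood--Paley-type projections, which holds on $L^p$ for $1<p\le 2$ with a constant depending only on $p$ and on the overlap bound (itself depending only on $C_0$ and $C_3$).

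The main obstacle — and the only place any real work is needed — is the verification of the bounded-overlap property, i.e.\ that freezing $N$ genuinely decouples the $x$-frequencies so that the boxes really do separate. This requires the quantitative control $|x_2-x_1|\lesssim C_0^2\eps\rho^2$ on an admissible pair (already available, as it is exactly the ingredient used in Lemma \ref{lpsquare}) together with a Taylor-expansion estimate showing that, once the first two frequency coordinates are pinned to within $\Landau(\de)$ and $\Landau(\de)$ respectively, the third coordinate $\xi_3$ is pinned to within $\Landau(\de^2)$ — which uses only the bounds \eqref{cubtype} on the first few derivatives of $h$ (in the scaled picture, of $F$). Once that geometric fact is in hand, the rest is the routine square-function argument and I would simply refer to the corresponding computation in \cite{bmv17}, noting that, just as for Lemma \ref{lpsquare}, only third-derivative bounds on the perturbation are actually invoked, so all constants are uniform in $\eps,\rho,\de$ and in the perturbation $h$ within the class \eqref{cubtype}.
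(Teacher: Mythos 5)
Your overall strategy is the correct one and matches the paper's: the Fourier transform of $\widehat{f_{i,j}d\sigma}\,\widehat{g_{i',j}^kd\sigma}$ is supported in the sumset $S_{1,i,j}+S_{2,i',j}^k$ of the two surface patches, and once these sumsets are shown to lie in a family of boxes with bounded overlap, the passage to the square function for $1<p\le 2$ is the standard (Rubio de Francia type) argument, exactly as in the paper. The functional-analytic half of your proposal is fine.

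The gap is in the geometric half, which you yourself identify as the only place where real work is needed. You claim that the separation ``is governed entirely by the $y$-frequencies'' and that the third frequency coordinate ``is then determined up to $\Landau(\de^2)$ by the first two via the surface.'' Both claims are false, and if the second were true the lemma would fail. After scaling to $\eps=\rho=1$, the second coordinate of the sumset is $y_1+y_2\approx (j+k)\de+\mathrm{const}$: it sees only the \emph{sum} $j+k$, so for each fixed value of $j+k$ there remain $\sim\de^{-1}$ pairs $(j,k)$ whose frequency boxes agree in the first two coordinates up to $\Landau(\de)$ (the first coordinate being essentially constant across the block $N$). If the third coordinate were a function of the first two up to $\Landau(\de^2)$, all of these $\sim\de^{-1}$ boxes would coincide and the overlap would be unbounded. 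What actually produces the orthogonality --- and what the paper's Lemma \ref{ao} proves --- is the opposite of your claim: with $a=x_1+x_2$ and $b=y_1+y_2$ frozen, the third coordinate equals $ab/2+\psi(y_1)+\Landau(\de)$, where $\psi'(y)=(y-\tfrac b2)^2[2F'''(\eta)+F'''(y)]$ satisfies $|\psi'|\sim C_3C_0^2\gg1$ thanks to the $y$-separation $|y_1-b/2|\sim C_0$ and, crucially, the \emph{lower} bound $|F'''|\sim C_3$ from the cubic-type hypothesis. This monotone, unit-rate dependence of the third coordinate on $y_1$ is what resolves the pairs $(j,k)$ with equal sum; your proposal never invokes the lower bound on $F'''$ (only upper bounds on derivatives), which is a telltale sign that the nondegeneracy driving the almost orthogonality has been lost. (A smaller inaccuracy pointing the same way: over a single sumset fiber the variation of $\phi(z_1)+\phi(z_2)$ is $\Landau(C_0\de)$, not $\Landau(\de^2)$, since $|\nabla\phi(z_1)-\nabla\phi(z_2)|\sim C_0$; accordingly the paper uses cubes of side $\sim\de$ in all three directions.)
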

\noindent {\it Proof of Lemma \ref{squaref}:} Notice first that a translation in  $x$ by $N\rho^2$ allows to reduce to the case $N=0,$  which we shall thus assume. Then the relevant sets $U_{1,i,j}$ and $U_{2,i',j}$  will all have their $x$-coordinates in the interval $[0,\eps\rho^2].$

 For $i,\,i',\,j,\,k$ as above, set
$S_{1,i,j}:=\{(\xi,\phi(\xi)): \xi\in U_{1,i,j}\},$
$S_{2,i',j}^k:=\{(\xi,\phi(\xi)):\xi\in
U_{2,i',j}^k\},$
and denote by ${(x',y')=}D_{\eps,\rho}(x,y):=(\eps\rho^2 x,\rho y)$  the scaling transformation which changes  coordinates from  $z=(x,y)$ to $z'=(x',y').$
The key to the square function estimate  \eqref{squarefunc} is the following almost orthogonality lemma:

\begin{lemnr}\label{ao}
Assume $N=0,$ and denote by $\tilde D_{\eps,\rho}, \rho >0,$  the scaling transformation on the ambient space $\Bbb R^3$ which is given by $\tilde D_{\eps,\rho}(x,y,w):=(\eps\rho^2 x, \rho y, \eps\rho^3 w).$  Then there is a family of cubes $\{Q_{i,i',j}^k\}_{i\in[0,\de^{-1}], |i-i'|\sim C_0^2\,,j\, ,k}$  in $\mathbb R^3$ with bounded overlap, whose  sides  are parallel to the coordinate axes  and of  length  $\sim\delta,$     such that
$S_{1,i,j}+S_{2,i',j}^k\subset
\tilde D_{\eps,\rho}( Q_{i,i',j}^k).$
 \end{lemnr}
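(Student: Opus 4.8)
The strategy is to pass to the scaled coordinates $z'=D_{\eps,\rho}(z)$, where (as noted after Lemma \ref{lpsquare}) we may treat the situation as if $\eps=\rho=1$, and $N=0$ means all relevant $x$-coordinates lie in $[0,1]$ after scaling (in original coordinates, in $[0,\eps\rho^2]$). In these normalized coordinates, $U'_{1,i,j}$ is essentially a $\de\times\de$ box, and each $U'^k_{2,i',j}$ is essentially a $\de\times\de$ box as well, by the definition of the decomposition in $k$; both sit inside the fixed strips, and $|i-i'|\sim C_0^2$, $j_2\sim C_0$. First I would record the exact location of these boxes: $U'_{1,i,j}$ has $y'$-coordinate in $[j\de, (j+1)\de)$ and $x'$-coordinate near $i\de$ (up to the affine slope correction of size $O(\eps)=O(1)$ times $\de$), while $U'^k_{2,i',j}$ has $y'$-coordinate in $[k\de,(k+1)\de)$ and, via \eqref{U2alt}, $x'$-coordinate determined by $-F'(y')/(1\vee\de)+a + O(\de)$; since $F$ is of cubic type and $y'$ ranges over an interval of length $\de$, the $x'$-extent of $U'^k_{2,i',j}$ is $O(\de)$ as well.

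\textbf{Key steps.} The Minkowski sum $S'_{1,i,j}+S'^k_{2,i',j}$ (graphs of $\phi_\de$ over the two boxes) then has: its first coordinate $x'_1+x'_2$ ranging over an interval of length $O(\de)$; its second coordinate $y'_1+y'_2$ ranging over an interval of length $O(\de)$ (sum of two intervals of length $\de$); and its third coordinate $\phi_\de(z'_1)+\phi_\de(z'_2)$. For the third coordinate one uses Taylor expansion of $\phi_\de(x',y')=x'y'+F(y')/(1\vee\de)$ around the centers of the two boxes: the linear terms contribute a displacement, and the quadratic and higher terms are $O(\de)$ because each variable moves by $O(\de)$ within its box and the relevant second derivatives of $\phi_\de$ ($\partial_{x'}\partial_{y'}=1$, $\partial_{y'}^2 F/(1\vee\de)$ of size $O(1)$ by \eqref{cuberrorF2} since $\de\le 1$) are bounded. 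Hence $S'_{1,i,j}+S'^k_{2,i',j}$ is contained in a cube $Q^k_{i,i',j}$ with axis-parallel sides of length $\sim\de$, centered at the sum of the two box-centers (lifted to the surfaces). Scaling back by $\tilde D_{\eps,\rho}$ converts $Q^k_{i,i',j}$ to a box of dimensions $\eps\rho^2\de\times\rho\de\times\eps\rho^3\de$, matching the claim with $\tilde D_{\eps,\rho}(Q^k_{i,i',j})$.

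\textbf{Bounded overlap.} It remains to see the cubes $\{Q^k_{i,i',j}\}$ have bounded overlap. The center of $Q^k_{i,i',j}$ has first coordinate $\sim (i+i')\de$, second coordinate $\sim (j+k)\de$, and third coordinate determined by the phase values; since $|i-i'|\sim C_0^2$ is essentially fixed, the first coordinate pins down $i$ (hence $i'$) up to $O(1)$ choices, and the second coordinate pins down $j+k$ up to $O(1)$ choices. For fixed $i,i'$ and fixed value of $j+k$, the third coordinate $\phi_\de(z_1')+\phi_\de(z_2')$ — evaluated at the box centers — is, up to $O(\de)$, a function of $j,k$ whose variation as $(j,k)$ moves along the line $j+k=\text{const}$ is $\sim\de\cdot|j-k|$ times a quantity comparable to the transversality $|\tau_{z_1}(z_1,z_2)|$; this is where \eqref{transscaled} (equivalently Lemmas \ref{sizeofdeltas}, \ref{transvscaled}) enters and forces separation of order $\de$ between distinct admissible choices, giving bounded overlap. \textbf{The main obstacle} I expect is precisely this last point: verifying that distinct pairs produce cubes whose third ($w$-) coordinates are genuinely separated (not merely that the projections to the first two coordinates have bounded overlap), which requires using the quantitative lower bound on the transversality rather than just the box geometry; this mirrors the analogous argument in \cite{bmv17} (Lemma 4.1 / the square-function lemmas there) and should go through with the cubic-type bounds \eqref{cuberrorF2} on $F$ in place of the special cubic $h(y)=y^3/3$.
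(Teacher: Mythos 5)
Your reduction to $\eps=\rho=1$, the containment of $S_{1,i,j}+S^k_{2,i',j}$ in an axis-parallel cube of side $\sim\delta$, and the overall structure of the overlap count (the first two coordinates of the center pin down $i+i'$ and $j+k$ up to $\Landau(1)$ choices, so everything hinges on the third coordinate separating the remaining choices) all match the paper's proof. The gap is in the one step you yourself flag as the main obstacle: the quantitative mechanism you propose for the third coordinate is the wrong one and would not give bounded overlap. You claim that along the line $j+k=\mathrm{const}$ the third coordinate varies by $\sim\delta\,|j-k|$ times a quantity comparable to $|\tau_{z_1}(z_1,z_2)|$. But in the coordinates of this lemma, Lemma \ref{sizeofdeltas} gives $|\tau_{z_1}(z_1,z_2)|\sim C_0^2\delta$ — it is small by design, since the lemma is applied in the curved-box regime $\delta\ll1$. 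A per-step variation of order $\delta\cdot C_0^2\delta$ only forces $|j-k|\lesssim\delta^{-1}$, i.e., no constraint at all, so the cubes could overlap $\delta^{-1}$ times.

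The correct separating quantity is of size $\gtrsim 1$, not $\sim\delta$. In the paper one sets $a=x_1+x_2$, $b=y_1+y_2$, uses the admissibility relation $x_2-x_1+F'(y_2)-F'(y_1)-\tfrac12F''(y_1)(y_2-y_1)=c\delta$ to eliminate $x_1$, and finds that the sum of the third coordinates equals $ab/2+\Landau(\delta)+\psi(y_1)$ with
\[
\psi'(y)=\big(y-\tfrac b2\big)^2\,\big[2F'''(\eta)+F'''(y)\big].
\]
The lower bound $|\psi'(y_1)|\sim C_3C_0^2\gg1$ uses $|F'''|\sim C_3$ from \eqref{cuberrorF2}, the fact that $F'''$ does not change sign (so the bracket cannot cancel), and the $y$-separation $|y_1-b/2|=\tfrac12|y_2-y_1|\sim C_0$ from \eqref{yseparation}. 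This quantity is comparable to $|\tau_{z_1}(z_1,z_2)-\tau_{z_2}(z_1,z_2)|\sim(y_2-y_1)^2$ after scaling (cf. \eqref{TV2+TV3}), i.e., essentially to the large transversality $|\tau_{z_2}|$, not to the small one $|\tau_{z_1}|$ that you invoke. With this replacement — third-derivative nondegeneracy plus $y$-separation in place of \eqref{transscaled} applied at $z_1$ — your outline closes and coincides with the paper's argument.
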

We remark that the amount of the overlap is in fact  entirely controlled by the size of the constant $C_3$ in \eqref{cubtype}   (and on our choice of $C_0$), but not on the constants $C_l$ for $l\ge 4$ in \eqref{cubtype}.

\medskip
\noindent {\it Proof of Lemma \ref{ao}:} Note first that by our assumptions we have  $V_1,V_2\subset [0,1]\times [0,2C_0\rho].$
Since
$$
\tilde D_{\eps,\rho}^{-1}\big(D_{\eps,\rho}(z'), \phi( D_{\eps,\rho}(z'))\big)=(x',y', x'y'+F(y'))
$$
(compare Subsection \ref{scaling transform}),
we may apply this scaling in order to  reduce our considerations  to the  case where $\eps=\rho=1,$  if we  replace the perturbation term $h$ by the function  $F$ which, according to \eqref{cuberrorF2}, shares the same type of estimates as $h.$  Notice also that, after  scaling,   the  sets  corresponding to $V_1,V_2$ in the new  coordinates
then satisfy  $V_1,V_2\subset [0,(\eps\rho^2)^{-1}]\times [0,2C_0].$

Therefore, from now on we shall work under these assumptions, denoting the new coordinates again by  $(x,y)$ in place  of
$(x',y'),$ in  order to defray the notation.

\smallskip

Notice also that  if $i\in[0,\de^{-1}], |i-i'|\sim C_0^2,$ then  the corresponding patches of surface $S_{1,i,j}$ and $S_{2,i',j}^k$   are contained in boxes of side length, say,  $2\delta,$ and sides parallel to the axes, whose projections to the $x$-axis lie within the unit interval $[0,1].$  Therefore we can choose for
$Q_{i,i',j}^k$  a square  of side length $4\delta,$  with  sides parallel to the axes,
with the property that $S_{1,i,j}+S_{2,i',j}^k\subset Q_{i,i',j}^k.$
We shall prove that the overlap is bounded, with a bound depending only on $C_0$ and the constant  $C_3$ in \eqref{cubtype}.

Note that, if $(x_1,y_1)\in U_{1,i,j}$ and $(x_2,y_2)\in  U_{2,i',j}^k$ with $|i-i'|\sim
C^2_0,$ then, by Lemma \ref{sizeofdeltas} we have
\begin{align*}
 \big| x_2-x_1+F'(y_2)-F'(y_1)-\tfrac 12 F''(y_1)(y_2-y_1)\big|\sim C_0^2 \delta.
\end{align*}

It suffices to prove the following: if $(x_1,y_1),(x_2,y_2)$ and $(x_1',y_1'),(x_2',y_2')$
are so that each  coordinate of these points is bounded by a large multiple of $C_0,$ the $y$-coordinates are positive and  satisfy $y_2-y_1\gtrsim C_0, $ $y_2'-y_1'\gtrsim C_0 $ (by the $y$-separation \eqref{yseparation}), and
\begin{eqnarray*}
x_2-x_1+F'(y_2)-F'(y_1)-\tfrac 12 F''(y_1)(y_2-y_1)&\sim&C_0^2\de,\\
x'_2-x'_1+F'(y'_2)-F'(y'_1)-\tfrac 12 F''(y'_1)(y'_2-y'_1)&\sim&C_0^2\de,\\
x_1+x_2&=&x_1'+x_2'+\Landau(\delta),\\
y_1+y_2&=&y_1'+y_2'+\Landau(\delta),\\
x_1y_1+F(y_1)+x_2y_2+F(y_2)&=&x_1'y_1'+F(y'_1)+x_2'y_2'+F(y'_2)
+\Landau(\delta),
\end{eqnarray*}
then
 \begin{align}\label{overlap2}
x_1'=x_1+\Landau(\delta), \ y_1'=y_1+\Landau(\delta),\ x_2'=x_2+\Landau(\delta),\
y_2'=y_2+\Landau(\delta).
\end{align}
To prove this, set
$$
a:=x_1+x_2, \quad b:=y_1+y_2,\qquad a':=x'_1+x'_2, \quad b':=y'_1+y'_2,
$$
and
$$
t_1:=x_1y_1+F(y_1),\qquad t_2:=x_2y_2+F(y_2).
$$
The analogous quantities defined by $(x_1',y_1'),(x_2',y_2')$ are denoted by $t'_1$ and $t'_2.$
Notice that by our assumptions,  $a$ and $b$ only vary of order $\Landau (\de)$ if we
replace $(x_1,y_1),(x_2,y_2)$ by $(x_1',y_1'),(x_2',y_2').$
Then,
$$
t_1+t_2=2x_1y_1-bx_1-ay_1+ab+F(y_1)+F(b-y_1).
$$

We next choose $c$ with $|c|\sim C^2_0,$ such that $x_2-x_1+F'(y_2)-F'(y_1)-\tfrac 12 F''(y_1)(y_2-y_1)=c\delta.$ Then we may
re-write
$$
x_1=\big (a-c\delta+F'(b-y_1)-F'(y_1)-\tfrac 12 F''(y_1)(b-2y_1)\big)/2,
$$
which implies that
\begin{eqnarray*}
t_1+t_2&=&\big(y_1-\frac b 2\big)\Big( a-c\de +F'(b-y_1)-F'(y_1)- F''(y_1)\Big(\frac b 2-y_1\Big)\Big)\\
&&\hskip1cm -ay_1+ab +F(y_1)+F(b-y_1)\\
&=&ab/2+\Landau(\delta)+\psi(y_1),\\
 \end{eqnarray*}
 where we have set
 $$
 \psi(y):=\big(y-\frac b 2\big)[F'(b-y)-F'(y)+ \Big(y-\frac b 2\Big)F''(y)]+F(y)+F(b-y).
 $$
  We compute that the derivative of $\psi$ is given by
\begin{eqnarray}\nonumber
\psi'(y)&=&\big(y-\frac b 2\big)[F''(y)-F''(b-y)+ (y-\frac b 2)F'''(y)]\\
&=&\big(y-\frac b 2\big)^2[2F'''(\eta)+F'''(y)],\label{psideriv}
\end{eqnarray}
where $\eta$ is some intermediate point between $y$ and $b-y.$

 Similarly, $t'_1+t'_2=a'b'/2+\Landau(\delta)+\psi(y'_1).$
Since  $a=a'+\Landau(\delta), b=b'+\Landau(\delta),$ hence $ab=a'b'+\Landau(\delta).$ By our assumption, $t_1+t_2=t_1'+t_2'+\Landau(\delta),$  we conclude that
\begin{equation}\label{psiys}
\psi(y_1)=\psi(y'_1)+ \Landau(\delta).
\end{equation}
Here, the implicit constant in $\Landau(\delta)$ depends so far only on $C_0.$ 
But,  because of the $y$-separation \eqref{yseparation}, we have
$|y_2-y_1|\gtrsim C_0 ,$ and since $b=y_2+y_1,$ we see that   $|y_1-b/2|\sim C_0.$ Moreover, since $|F'''|\sim C_3,$ so that $F'''$ in particular does not change sign, we deduce from \eqref{psideriv} that for all relevant $y$'s we have
$$
|\psi'(y)|\sim C_3 |y-b/2|^2\sim C_3 C_0^2\gg 1,
$$
if we choose $C_0$ sufficiently large.

In combination with \eqref{psiys} this shows that we must have $y_1'=y_1+\Landau(\delta),$  where the implicit constant in $\Landau(\delta)$ depends only on $C_3$ and $C_0,$  hence also
$y_2'=y_2+\Landau(\delta),$ and then our first three assumptions imply also the remaining
assertions in \eqref{overlap2}.

This finishes the proof of the almost orthogonality Lemma \ref{ao}.
\hfill $\Box$
\bigskip

By means of the preceding lemmas and Rubio de Francia's estimate \cite{rdf} (see also \cite{c67}, \cite{co81}) we can now argue in almost exactly the same way as in \cite{bmv17} in order to estimate the contribution of the second sum $\sum_{\delta\ll 1}\sum_{i,i',j} \widehat{f_{i,j}^\delta
d\sigma}\widehat{g_{i',j}^\delta d\sigma}$ in \eqref{EfgA} to $\|\ext_{V_1} (f) \ext_{V_2} (g)\|_p$ in \eqref{VVbilin}. In this way, we  see that it is of  the order
\begin{eqnarray*}
&&\sum_{\delta\ll 1}C_{p,q}\, \ \eps^{2(1-\frac 1p-\frac 1q)}\delta^{5-2/q-7/p} \rho^{6(1-1/p-1/q)}\|f\|_q\|g\|_q\\
&&\lesssim C_{p,q}\, \ \eps^{2(1-\frac 1p-\frac 1q)} \rho^{6(1-1/p-1/q)}\|f\|_q\|g\|_q.
\end{eqnarray*}
This estimate is even stronger than the required estimate in \eqref{VVbilin}. Notice that the additional factor $\eps^{2(1-\frac 1p-\frac 1q)}$ appears here, due to the estimate in Theorem \ref{bilinear2} for Case 2,  which was not present in \cite{bmv17} (where we had $\eps=1$). Also, the power of $\rho$ is better than needed, but these gains do not help
for the total estimate of $\|\ext_{V_1} (f) \ext_{V_2} (g)\|_p,$ because of the presence of first sum in \eqref{EfgA}, in which $\delta\gtrsim 1$. 
We  leave the details  to the interested reader.
\smallskip

This completes the proof of Lemma \ref{V1V2bilin}.
\hfill $\Box$
\medskip

By means of Lemma \ref{V1V2bilin}, we may  finally argue as in the last part of the proof of Theorem 1.1 in \cite{bmv17} in order to sum the contributions by all admissible  pairs of ``horizontal strips''  $V_1\sim V_2$ and arrive at  the estimate \eqref{extka1}, thus completing the proof of Theorem \ref{mainresult}.
\hfill $\Box$
\medskip


\thispagestyle{empty}

\renewcommand{\refname}{References}

\end{document}